\title{Fonction complexité associée à une application ergodique du tore}
\author{{\Large Jean-François Bertazzon }\\
 {\it \small Laboratoire d’Analyse, Topologie et Probabilités,} \\ 
  {\it \small Aix-Marseille Université,}
   \\ {\it \small Avenue de l'escadrille Normandie-Niémen. 13397  Marseille, France} }
\begin{document}

\newcommand{\floor}[1]{{\left\lfloor #1 \right\rfloor}}

\newtheorem{theoreme}{Théorème}[section] 
\newtheorem{lemme}[theoreme]{Lemme}   
\newtheorem{proposition}[theoreme]{Proposition}

\newcounter{rmq} \setcounter{rmq}{0}
\newenvironment{remarque}{\refstepcounter{rmq}\noindent{\textsc{\small Remarque \thermq.}}}{} 

\renewcommand{\proofname}{\textup{Preuve}}

\maketitle

\renewcommand{\abstractname}{\textup{\textsc{Abstract}}}
\begin{abstract}
In this article we give the optimal lower bound for the complexity function of a planar translation which induces an ergodic rotation of the torus $\mathbb{R}^2 / \mathbb{Z}^2$.  In addition, we give an explicit calculation of this complexity for the application $(x,y) \mapsto (x+1/\phi^2,y+x-1/ (2\phi^3))$, where $\phi$ is the golden mean.
\end{abstract}

\renewcommand{\abstractname}{\textup{\textsc{Résumé}}}
\begin{abstract}
Nous proposons dans ce travail de minorer de manière optimale la fonction complexité associée à une translation du plan,  qui induit une rotation ergodique du tore $\mathbb R^2/\mathbb Z^2$. De plus, nous donnons un calcul explicite de cette complexité pour l'application $(x,y)\mapsto(x+1/\phi^2,y+x-1/(2\phi^3))$, où $\phi$ désigne le nombre d'or.
\end{abstract}

\section{Introduction} \label{se:intro}

Nous poursuivons dans ce travail, l'étude des interactions entre les systèmes dynamiques et les suites infinies avec un nombre fini de symboles. Nous nous concentrons principalement sur deux types de systèmes : les \textit{rotations} $(x,y)\mapsto(x+\alpha,y+\beta)$ du tore $\mathbb R^2/\mathbb Z^2$, et les \textit{nilsystèmes affines}, qui sont des extensions de rotations du type $(x,y)\mapsto(x+\alpha,y+x+\beta)$. Ces systèmes sont particulièrement intéressants car ils interviennent dans des problèmes délicats d'arithmétique, que ce soit le problème de généralisation de l'algorithme des fractions continues à la dimension 2 pour les rotations du tore, ou celui de la répartition des suites $( \alpha n^2$ mod $1)_n$ pour les nilsystèmes affines.\\

Il est alors relativement naturel de vouloir conjuguer ces systèmes avec des échanges de morceaux du plan. L'existence d'une telle conjugaison est une question délicate, liée à la recherche de partitions génératrices, sur laquelle nous ne nous attarderons pas : il existe des échanges de morceaux conjugués aux translations \cite{MR802634}, et aux nilsystèmes affines \cite{MR1390569}. Les échanges de morceaux se font dans un cas par translation et dans l'autre de manière affine.\\

Pour chaque système, il peut exister de nombreux échanges de morceaux qui lui sont conjugués. Nous pouvons associer à chaque échange de morceaux une fonction \textit{complexité} qui compte le nombre de morceaux dans le raffinement de la partition formée par les morceaux initiaux. Entre deux échanges de morceaux, nous privilégierons celui qui a la fonction complexité la plus petite à celui qui a les morceaux les plus réguliers. Empiriquement, son étude donne plus de résultats sur le comportement du système.\\

Par exemple, pour tout couple $(\alpha,\beta)$ rationnellement indépendants, P. Arnoux, C. Mauduit, L. Shiokawa et J.-I. Tamura étudient dans \cite{MR1279582,MR1259106} des échanges de morceaux polygonaux par translation, conjugués aux rotations du tore $(x,y)\mapsto(x+\alpha,y+\beta)$. Ils arrivent à calculer explicitement la fonction complexité conjecturée par G. Rauzy dans \cite{MR802634}, et trouvent pour tout entier $n$ : $p(n)=n^2+n+1$. 

Cependant, pour certains couples $(\alpha,\beta)$, nous pouvons trouver des échanges de morceaux de complexité $p(n)=2n+1$, dont le plus célèbre est celui qui échange les pièces du fractal de Rauzy. Bien que les morceaux soient à bords fractals, leurs échanges nous donnent beaucoup plus d'informations sur le système. Nous renvoyons à \cite{MR2180244} pour mieux comprendre les interactions entre l'étude des suites infinies avec un nombre fini de symboles et les translations du cercle ou du tore.\\

Nous commençons par vérifier que cette complexité en $2n+1$ est en fait optimale, puisque nous démontrons que tout échange de morceaux conjugué à une translation du plan est de complexité au moins $n\to2n+1$. Cependant, dans une deuxième partie, nous verrons que les nilsystèmes affines se comportent différemment. En effet, pour un exemple bien choisi, la complexité d'un de ces systèmes est égale à la complexité de la rotation du cercle dont il est l'extension : $n\to n+1$.

\section{Résultats principaux et notations} \label{se:res}

Soit $T$ une application mesurable du plan $\mathbb R^2$ dans lui-même qui préserve la mesure de Lebesgue du plan $\mu$ et qui est $\mu$-presque sûrement $\mathbb Z^2$-périodique. C'est-à-dire
\[
\mu\big{(}\{\boldsymbol{x}\in \mathbb R^2; T(\boldsymbol{x}+\mathbb Z^2)\not\subset \{T(\boldsymbol{x})\}+\mathbb Z^2\}\big{)}=0.
\]

De telles applications induisent des applications $\overline{T}$ du tore $\mathbb R^2/\mathbb Z^2$ dans lui-même. Nous noterons $\overline{\mu}$ la mesure de Lebesgue du tore et  $\overline{\mathcal{T}}$ le système dynamique $(\overline{T},\mathbb R^2/\mathbb Z^2,\overline{\mu})$.\\

Un échange de $m$ morceaux du plan $\mathcal{R}=(R,D,\mu)$ est dit \textit{adapté} à l'application $T$ s'il vérifie les propriétés suivantes.
\begin{enumerate}[1)]
\item Le système dynamique qu'il engendre est équivalent en mesure au système $\overline{\mathcal{T}}$.
\item Le sous-ensemble du plan $D$ est la réunion presque sûrement disjointe de $m$ morceaux mesurables $(D_n)_{n\in \{1,\dots,m\}}$ (i.e. $\forall (i,j) \in \{1,\dots,m\}^2$, $\mu(D_i\cap D_j)=0$).
\item La mesure de l'ensemble des points $\boldsymbol{x}$ de $D$ tels qu'il existe un point $\boldsymbol{y}$ de $D$ distinct de $\boldsymbol{x}$, vérifiant $\boldsymbol{x} -\boldsymbol{y} \in \mathbb Z^2$, est nulle.
\item Pour tout point $\boldsymbol{x}$ de $D$, $R(\boldsymbol{x})=T(\boldsymbol{x})-\boldsymbol{n}_i$ si $\boldsymbol{x}\in D_i$, avec  $\boldsymbol{n}_i=(n_i,m_i)\in \mathbb Z^2$.
\end{enumerate}
En particulier, avec cette définition, nous n'imposons pas aux morceaux  d'être bornés.\\

Considérons par exemple l'application $T:(x,y)\mapsto(2x,3y)$. L'échange des $6$ morceaux $D_{i,j} = [i/2,(i+1)/2[\times [j/3,(j+1)/3[$ pour tout $(i,j)\in\{0,1\}\times \{0,1,2\}$, sur lesquels agit l'application $R(\boldsymbol{x})=T(\boldsymbol{x})-(i,j)$, si $\boldsymbol{x}\in D_{i,j}$, est adapté à $T$.\\

Pour chaque échange de morceaux $\mathcal{R}$ adapté à $T$, le codage de l'orbite des points nous permet de définir un langage $\mathfrak  L_{\mathcal{R}}$ de $\{1,\dots,m\}^*$ ; ainsi qu'un système dynamique symbolique $\mathcal{S}_{\mathcal{R}}$. Nous noterons $p_{\mathcal{R}}$ la fonction complexité du langage $\mathcal L_{\mathcal{R}}$. Lorsque le système dynamique symbolique $\mathcal{S}_{\mathcal{R}}$ est conjugué en mesure au système dynamique $\overline{\mathcal{T}}$, nous dirons que l'échange de morceaux $\mathcal{R}$ est \textit{conjugué} à ${T}$. Nous reviendrons plus longuement sur ces différents objets dans la section \ref{se:os}.\\

Lorsqu'il existe un échange de morceaux conjugué à $T$, nous pouvons définir la fonction complexité associée à l'application $T$ pour tout entier $n$, par 
\[
p^{T}  (n) = \inf \Big{\{} p_{\mathcal{R}}(n)  \mbox{, où $\mathcal{R}$ est un échange de morceaux conjugué à $T$} \Big{\}}.
\]
Par le théorème de Krieger, lorsque le système $\overline{\mathcal{T}}$ est d'entropie nulle pour la mesure $\overline{\mu}$, la fonction complexité associée à l'application $T$ est bien définie.\\

Dans la section \ref{se:preuve1}, nous minorons la fonction complexité associée à une translation du plan.
\begin{theoreme} \label{th:1}
Si $T$ est une translation du plan $\mathbb R^2$, telle que $\overline{\mathcal{T}}$ soit ergodique, alors la fonction complexité associée à $T$ est bien définie et pour tout entier $n$, elle est supérieure à $2n+1$.
\end{theoreme}

Dans \cite{MR802634}, G. Rauzy  conjugue les rotations du tore avec des échanges de $3$ morceaux du plan. Donc, pour une translation $T$ du plan, qui induit une translation ergodique du tore, la première valeur de la fonction complexité $p^T(1)$ est $3$. D'autre part, la translation $T$ conjuguée à l'échange des pièces du fractal de Rauzy est de complexité $n \to 2n+1$. À l'heure actuelle, nous ne disposons que d'arguments constructifs pour majorer la complexité de toutes les translations du plan. Savoir si une translation du plan ergodique sur le tore est de complexité $n\to 2n+1$ et si la complexité est atteinte pour tous les entiers par le même échange de morceaux, est une question encore ouverte, et à laquelle il semble très difficile d'apporter une réponse.\\

Dans la section \ref{se:preuve2},  nous construirons une suite d'échanges de morceaux,  conjugués à un nilsystème affine bien choisi, dont la complexité décroît. Nous reprenons des idées utilisées par G. Rauzy, pour construire ce qui est appelé aujourd'hui  le fractal de Rauzy. Mais contrairement à sa construction, la nôtre ne converge pas vers un «échange de morceaux bornés limite» (cf. remarque \ref{rema}). Une des conséquences de ce résultat est qu'il n'y a donc pas, \textit{a priori}, d'échanges de morceaux bornés privilégiés du point de vue de la complexité pour  s'intéresser aux nilsystèmes affines.

\begin{theoreme} \label{th:2}
Notons $T_\phi$ l'application définie par 
\begin{equation}
T_\phi(x,y) =\big{(} x+1/\phi^2,y+x-1/(2\phi^3) \big{)} \mbox{, où $\phi$ est le nombre d'or. }
\end{equation}
Pour tout entier $M$ fixé, il existe un échange de morceaux $\mathcal R_M$ conjugué à $\overline{\mathcal T}_\phi$ tel que pour tout entier $1\leq k \leq M$, $p_{\mathcal R_M} (k)=k+1$. C'est-à-dire, pour tout entier $n$, $p^{T_\phi} (n)= n+1$.
\end{theoreme}

\section{Objets symboliques} \label{se:os}

Nous reprenons les notations de l'introduction. Soit $T$ une application mesurable du plan $\mu$-presque sûrement $\mathbb Z^2$-périodique et qui préserve la mesure de Lebesgue. Fixons $\mathcal{R}$ un échange de $m$ morceaux conjugué à $T$. \\

Pour $\mu$-presque tout point $\boldsymbol{x}\in D$, $C(\boldsymbol{x})$ désigne la suite d'éléments de $\{1,\dots,m\}^\mathbb N$ dont le $n$-ième terme est $i$ si $R^n(\boldsymbol{x})\in D_i$. Il est relativement clair que cette application de \textit{codage} $C$ est mesurable. Nous noterons $S_{\mathcal{R}}$ l'ensemble des suites obtenues de cette manière et $\mu_{\mathcal{R}}$ la mesure de probabilité sur $S_{\mathcal{R}}$ qui provient de la mesure de Lebesgue $\mu$ via $C$ (c'est-à-dire pour tout ensemble mesurable $A$, $\mu_{\mathcal{R}}(A)=\mu(C^{-1}A)$). Le \textit{système dynamique symbolique} $\mathcal{S}_{\mathcal{R}}$ associé à ${\mathcal{R}}$ est le triplet $(\mathcal S,S_R,\mu_{\mathcal{R}})$, où $\mathcal S$ désigne le décalage classique. De cette manière, il est clair que le système $\mathcal{S}_{\mathcal{R}}$ est un facteur mesuré du système $\overline{\mathcal{T}}$ et que l'application de codage $C$ est l'application facteur associée. Les codages ainsi obtenus sont les codages «naturels» décrits par V. Berthé, S. Ferenczi et L.Q. Zamboni dans \cite{MR2180244}.\\

Notons $\{1,\dots,m\}^*$ l'ensemble des mots à valeur dans $\{1,\dots,m\}$ finis ou infinis. La longueur d'un mot $u\in\{1,\dots,m\}^*$ sera notée $| u | \in \mathbb N \cup \{+\infty\}$. Soient $v \in  \{1,\dots,m\}^{\mathbb N}$ et $u \in \{1,\dots,m\}^*$. Le mot $u$ est un \textit{facteur} de $v$ s'il existe un entier $m$ tel que pour tout entier $n\leq | u|$, $u_n=v_{m+n}$. Un \textit{langage} est un sous-ensemble de $\{0,1\}^*$. Il est dit \textit{factoriel} si pour tout mot $v$ du langage, tout facteur $u$ de $v$ appartient au langage. Nous noterons $\mathcal L(\mathfrak L)$ le langage factoriel engendré par  un langage $\mathfrak L$. Nous définissons le langage $ \mathfrak L_{\mathcal{R}} $ associé à l'échange de morceaux $\mathcal{R}$, par  $ \mathfrak L_{\mathcal{R}} = \mathcal L(S_{\mathcal{R}})$. La fonction complexité $p_{\mathcal R}$ associée au langage $\mathfrak L_{\mathcal{R}}$ est définie pour tout entier $n$ par :
\[
p_{\mathcal{R}} (n) = \# \{ u \in \mathfrak L_{\mathcal{R}} \mbox{ tel que } | u |=n\}.
\]

Ces définitions ne sont sûrement pas les meilleures pour une étude générale des applications du tore, mais elles suffiront amplement pour s'intéresser aux classes de systèmes très particulières que sont les translations et les nilsystèmes affines.\\

Nous rappelons également le résultat suivant de P. Halmos  qui permet de déterminer si le système symbolique associé à un échange de morceaux adapté à $T$ est conjugué en mesure au système $\overline{\mathcal{T}}$.

\begin{proposition}[P. Halmos] \label{prop:raff}
Soit $T$ une application du plan $\mu$-presque sûrement $\mathbb Z^2$-périodique, qui préserve la mesure de Lebesgue et telle que le système $\overline{\mathcal{T}}$ soit ergodique. Soit $\mathcal{R}$ un échange de $m$ morceaux du plan adapté à $T$. Alors, les assertions suivantes sont équivalentes.
\begin{enumerate}[1)]
\item L'échange de morceaux $\mathcal{R}$  est conjugué à $T$.
\item Pour toute suite $(u_n)\in \{1,\dots,m\}^{\mathbb N}$,  \[
\mu\left( \bigcap \limits_{k=1}^N R^{-k} D_{u_k} \right) \underset{N\to+\infty}{\longrightarrow } 0.
\]
\item La mesure de Lebesgue de l'ensemble $\{\boldsymbol{x}\in D$ tel qu'il existe  $\boldsymbol{y}\in D\setminus\{\boldsymbol{x}\}$ tel que $C(\boldsymbol{x})=C(\boldsymbol{y})\}$ est nulle.
\end{enumerate}
\end{proposition}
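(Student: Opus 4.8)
The plan is to organise the proof as $(1)\Leftrightarrow(3)$, which is essentially formal, followed by $(2)\Leftrightarrow(3)$, where the dynamics and the ergodicity hypothesis are genuinely used. For the first equivalence I would start from the observation, implicit in the adaptation hypotheses, that conditions $2)$ and $3)$ of the definition make $D$ a fundamental domain mod $0$ and $R$ a copy of $\overline T$, so that the coding map $C$ realises $\mathcal S_{\mathcal R}$ as a \emph{measured factor} of $\overline{\mathcal T}$: it is measurable, it intertwines $R$ with the shift $\mathcal S$, and it transports $\mu$ to $\mu_{\mathcal R}$. Since $(D,\mu)$ and $(S_{\mathcal R},\mu_{\mathcal R})$ are standard probability spaces, a factor map between them is a measure isomorphism precisely when it is injective off a null set. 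As ``$\mathcal R$ conjugué à $T$'' means by definition that $C$ is such an isomorphism, assertion $(1)$ is equivalent to the essential injectivity of $C$, which is the literal content of $(3)$.

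Next I would translate $(2)$ into a statement about the fibres of $C$. For a fixed sequence $u$ the sets $\bigcap_{k=1}^{N} R^{-k}D_{u_k}$ decrease with $N$ inside $D$, which has finite measure, so by continuity from above their measures converge to $\mu\big(\bigcap_{k=1}^{\infty} R^{-k}D_{u_k}\big)$. Using that $R$ is invertible and measure preserving and that $C\circ R=\mathcal S\circ C$, one identifies this infinite intersection, after a single application of $R^{-1}$, with the fibre $C^{-1}(\mathcal S u)$, so that
\[
\mu\Big(\bigcap_{k=1}^{N} R^{-k}D_{u_k}\Big)\underset{N\to+\infty}{\longrightarrow}\mu\big(C^{-1}(\mathcal S u)\big).
\]
As $u$ ranges over all sequences so does $\mathcal S u$, hence $(2)$ says exactly that every fibre of $C$ is $\mu$-negligible. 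The implication $(3)\Rightarrow(2)$ is then immediate: if $C$ is essentially injective its fibres are singletons off a null set, and singletons are negligible because the Lebesgue measure is non-atomic.

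The substantive point is $(2)\Rightarrow(3)$, and this is where I expect the main obstacle, since for a general measurable map ``every fibre negligible'' is strictly weaker than ``almost every fibre a single point''; only the dynamics and the ergodicity of $\overline{\mathcal T}$ can bridge the gap. My plan is to disintegrate $\mu$ over the factor, writing $\mu=\int \mu_w\,d\mu_{\mathcal R}(w)$ with $\mu_w$ a probability measure carried by the fibre $C^{-1}(w)$. The equivariance $C\circ R=\mathcal S\circ C$ together with the invertibility and measure-preservation of $R$ yields $R_*\mu_w=\mu_{\mathcal S w}$ for $\mu_{\mathcal R}$-almost every $w$; consequently any quantity attached to $\mu_w$ that is preserved by transporting a probability measure along a bijection, such as the total mass of its atomic part, is a shift-invariant function of $w$. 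Since $\mathcal S_{\mathcal R}$ is a factor of the ergodic system $\overline{\mathcal T}$ it is itself ergodic, so each such function is $\mu_{\mathcal R}$-almost everywhere constant. The crux, and the step I expect to require the most care, is then to combine this rigidity with the shrinking of the cylinders furnished by $(2)$ to force the conditional measures $\mu_w$ to be Dirac masses almost surely: a persistent non-trivial conditional structure would have to be invariant under the whole orbit, and one must show this is incompatible with the fact that the generating cylinders separate $\mu$-almost every pair of points. Once $\mu_w=\delta_{x(w)}$ almost everywhere, $C$ is essentially injective and $(3)$ follows, closing the circle.
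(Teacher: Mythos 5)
A preliminary remark: the paper gives no proof of this proposition at all --- it is recalled as a result of P.~Halmos, and the only implication actually used later (in the proofs of Propositions~\ref{prop:phi1} and \ref{prop:phi2}) is $(3)\Rightarrow(1)$. So your attempt has to be judged on its own merits. The parts you complete are essentially sound: your identification of condition $(2)$ with ``every fibre of $C$ is $\mu$-negligible'' is correct (and needs only measure preservation of $R$, not invertibility, since $\mu\bigl(\bigcap_{k\geq1}R^{-k}D_{u_k}\bigr)=\mu\bigl(R^{-1}C^{-1}(\mathcal S u)\bigr)=\mu\bigl(C^{-1}(\mathcal S u)\bigr)$); the implication $(3)\Rightarrow(2)$ does follow from non-atomicity of Lebesgue measure; and $(1)\Leftrightarrow(3)$ is indeed formal \emph{provided} one reads $(1)$ as ``$C$ itself is a measure isomorphism''. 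Note, though, that this reading is a charitable one: the paper literally defines $(1)$ as abstract conjugacy of $\mathcal S_{\mathcal R}$ with $\overline{\mathcal T}$, and an ergodic system can be isomorphic to a proper factor of itself, so even this equivalence is not pure formality under the literal definition.

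The genuine gap is $(2)\Rightarrow(3)$, and it is not merely ``the step requiring the most care'': it cannot be closed by any argument of the kind you propose, because with ergodicity as the only dynamical hypothesis the implication is false. Take for $\overline T$ the automorphism of $\mathbb R^2/\mathbb Z^2$ induced by the matrix $\bigl(\begin{smallmatrix}2&1\\1&1\end{smallmatrix}\bigr)$: its linear lift $T$ preserves plane Lebesgue measure, is $\mathbb Z^2$-p\'eriodique, and $\overline{\mathcal T}$ is ergodic. Take $D=[0,1)^2$ and, as pieces, the common refinement of the level sets of $\boldsymbol x\mapsto n(\boldsymbol x)$ (the integer vector with $T(\boldsymbol x)-n(\boldsymbol x)\in D$) by a Markov partition; this exchange is adapted to $T$. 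Every set $\bigcap_{k=1}^{N}R^{-k}D_{u_k}$ is contained in a Markov cylinder, hence has exponentially small measure, so $(2)$ holds. But $(3)$ fails: almost every point shares its coding with a whole segment of its stable manifold; equivalently, a finite partition can be a one-sided generator of an \emph{invertible} system only if the entropy vanishes, whereas here $h(\overline{\mathcal T})=\log\bigl((3+\sqrt5)/2\bigr)>0$. (Assertion $(1)$ fails too: the symbolic system is a one-sided shift of positive entropy, hence non-invertible, hence not conjugate to $\overline{\mathcal T}$.) Your disintegration scheme is powerless against this example: the equivariant conditional measures $\mu_w$ are the normalized length measures on stable segments, perfectly compatible with $R_*\mu_w=\mu_{\mathcal S w}$ and with ergodicity of the factor, yet not Dirac masses. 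Any correct proof of $(2)\Rightarrow(3)$ must therefore invoke more than ergodicity --- in the paper's setting, the rigidity of exchanges adapted to translations or to $T_\phi$, namely that two points with the same coding keep a \emph{constant} difference vector along their entire orbits; this zero-entropy mechanism is exactly what the paper uses when it verifies criterion $(3)$ directly for its explicit exchanges, and it never appears in your argument. In short: the statement as printed needs an implicit restriction on $\overline{\mathcal T}$ to be true, and your proposal, which tries to prove it from ergodicity alone, necessarily leaves its central implication unproved.
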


\begin{proposition} \label{prop:n+1}
Soit $\overline{\mathcal{T}}$ un système dynamique défini sur le tore $\mathbb R^2/\mathbb Z^2$. Fixons $T$ un relèvement de $\overline{T}$ défini dans le plan et supposons qu'il existe un échange de morceaux conjugué à $T$. Nous supposons, de plus, que le système $\overline{\mathcal{T}}$  est totalement ergodique (c'est-à-dire que toutes ses puissances $\overline{\mathcal{T}}^k=(\overline{T}^k,\mathbb R^2/\mathbb Z^2,\overline{\mu})$ sont ergodiques). Alors, la fonction complexité associée à $T$ existe et croît strictement.
En particulier, pour tout entier $n$ elle est supérieure ou égale à $n+1$. 
\end{proposition}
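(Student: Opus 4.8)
The plan is to exploit the infimum structure of $p^T$ in order to reduce the claim to a Morse--Hedlund-type rigidity statement for a single conjugate exchange of pieces, which will then clash with total ergodicity. First I would record that for every conjugate exchange $\mathcal{R}$ the function $p_{\mathcal{R}}$ is non-decreasing: since the language $\mathfrak{L}_{\mathcal{R}}$ is factorial and each of its factors occurs inside an infinite sequence of $S_{\mathcal{R}}$, every factor of length $n$ admits at least one right extension, so deleting the last letter gives a surjection from factors of length $n+1$ onto factors of length $n$; hence $p_{\mathcal{R}}(n)\le p_{\mathcal{R}}(n+1)$, and taking the infimum over $\mathcal{R}$ shows that $p^T$ is itself non-decreasing. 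Suppose now, for contradiction, that $p^T$ is not strictly increasing, say $p^T(n_0)=p^T(n_0+1)=:c$. As an infimum of positive integers, $p^T(n_0+1)$ is attained by some conjugate exchange $\mathcal{R}^*$, and for that exchange $c=p^T(n_0)\le p_{\mathcal{R}^*}(n_0)\le p_{\mathcal{R}^*}(n_0+1)=c$, so $p_{\mathcal{R}^*}(n_0)=p_{\mathcal{R}^*}(n_0+1)$.

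Next comes the combinatorial heart. The equality $p_{\mathcal{R}^*}(n_0)=p_{\mathcal{R}^*}(n_0+1)$ forces the surjection above to be a bijection, i.e. every factor of length $n_0$ has exactly one right extension in $\mathfrak{L}_{\mathcal{R}^*}$. Consequently the evolution of the length-$n_0$ window along any sequence $x\in S_{\mathcal{R}^*}$ is deterministic: the block $x_k\cdots x_{k+n_0-1}$ determines $x_{k+n_0}$. Since there are only $c$ possible windows, the window trajectory is eventually periodic, so every $x\in S_{\mathcal{R}^*}$ is eventually periodic with eventual period at most $c$. In particular $\mu_{\mathcal{R}^*}$-almost every sequence is eventually periodic, and, setting $k_0=\operatorname{lcm}(1,\dots,c)$, almost every sequence lies in $\bigcup_{N\ge 0}\mathcal{S}^{-N}P$, where $P=\{x:\mathcal{S}^{k_0}x=x\}$ is the set of $k_0$-periodic points.

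Finally I would derive the contradiction from total ergodicity. Since $\mathcal{R}^*$ is conjugate to $T$, the system $\mathcal{S}_{\mathcal{R}^*}=(\mathcal{S},S_{\mathcal{R}^*},\mu_{\mathcal{R}^*})$ is measurably isomorphic to $\overline{\mathcal{T}}$, hence totally ergodic and carried by a non-atomic probability space. The set $P$ is $\mathcal{S}$-invariant, so ergodicity of $\mathcal{S}$ gives $\mu_{\mathcal{R}^*}(P)\in\{0,1\}$; were it $1$, then $\mathcal{S}^{k_0}$ would be the identity almost everywhere, contradicting the ergodicity of $\mathcal{S}^{k_0}$ on a non-atomic space. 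Thus $\mu_{\mathcal{R}^*}(P)=0$, and by measure preservation $\mu_{\mathcal{R}^*}(\mathcal{S}^{-N}P)=0$ for every $N$; countable subadditivity then yields $\mu_{\mathcal{R}^*}\big(\bigcup_{N\ge 0}\mathcal{S}^{-N}P\big)=0$, contradicting the full-measure statement of the previous step. Hence $p^T$ is strictly increasing. Being integer-valued, it satisfies $p^T(n)\ge p^T(1)+(n-1)$, and $p^T(1)\ge 2$ because a conjugate system using a single letter would reduce to a fixed point, i.e. an atom, contrary to the non-atomicity of $\overline{\mu}$; therefore $p^T(n)\ge n+1$ for every $n$.

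The main obstacle is the middle step: extracting uniform eventual periodicity from the single equality $p_{\mathcal{R}^*}(n_0)=p_{\mathcal{R}^*}(n_0+1)$ and, above all, converting this purely combinatorial periodicity into a measure-theoretic statement robust enough to be refuted by total ergodicity. The delicate point is that the shift here is one-sided and a priori non-invertible, which is why I route the contradiction through the preimages $\mathcal{S}^{-N}P$ together with subadditivity, rather than through invariance of the periodic set directly.
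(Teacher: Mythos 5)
Your proof is correct, and it takes a genuinely different route from the paper's. The paper argues geometrically, at the level of the partition in the plane: if some conjugate exchange satisfies $p_{\mathcal{R}}(n)=p_{\mathcal{R}}(n+1)=j$, then the partition $P_1,\dots,P_j$ obtained by refining the initial pieces along $n-1$ preimages is not cut by one further preimage, so $R$ sends each $P_i$ into a single $P_{j(i)}$ (mod null sets); the map $\boldsymbol{x}\mapsto i$ is then a measurable factor onto a finite system, and a totally ergodic system admits no nontrivial finite factor. Your argument is instead purely symbolic, in the Morse--Hedlund spirit: equality of consecutive complexities forces unique right extensions, hence eventual periodicity of every coding sequence, and you then contradict the existence of the invariant measure by writing the set of eventually periodic sequences as $\bigcup_{N}\mathcal{S}^{-N}P$, with $P$ the null, essentially invariant set of $k_0$-periodic points. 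Both proofs are sound, and two points of comparison are worth making. First, your reduction to a single exchange $\mathcal{R}^*$ attaining the infimum (legitimate since the complexities are positive integers) makes explicit a step the paper leaves implicit, and your routing of the contradiction through preimages $\mathcal{S}^{-N}P$ and countable subadditivity is exactly the right way to handle the one-sided, non-invertible shift. Second, your route is actually stronger than you claim: since a $k_0$-periodic one-sided sequence is determined by its first $k_0$ letters, $P$ is a finite set, so non-atomicity of $\mu_{\mathcal{R}^*}$ (inherited from Lebesgue measure through the conjugacy) already gives $\mu_{\mathcal{R}^*}(P)=0$ without invoking ergodicity of $\mathcal{S}^{k_0}$ at all; hence your argument establishes strict growth of the complexity for any exchange of pieces conjugate to a system carrying a non-atomic invariant measure, whereas the paper's finite-factor mechanism genuinely needs total ergodicity. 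What the paper's version buys in exchange is brevity and coherence with the rest of the text: the same picture of a non-refining partition permuted by $R$, with the induced constraints on the measures of the pieces, is precisely what drives the subsequent proofs of Proposition \ref{pr:1} and Theorem \ref{th:1}.
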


\begin{proof}[de la proposition \ref{prop:n+1}]
Fixons un système $\overline{\mathcal{T}}$ satisfaisant les hypothèses de la proposition. Supposons alors qu'un des échanges de morceaux conjugués à $T$ ait une $n$-ième complexité égale à $j$ et une $(n+1)$-ième complexité encore égale à $j$. Cela veut donc dire que la partition initiale se raffine en $j$ morceaux $P_1$, $P_2$, $\dots$ $P_j$, lorsque l'on s'interesse à ses $(n-1)$-ièmes pré-images. Puisque la $(n+1)$-ième complexité est encore égale à $j$, cela signifie que la partition $P_1$, $\dots$ $P_j$ ne se raffine pas. Par ergodicité, l'application $\boldsymbol{x}\mapsto i\in\{1,\dots,j\}$ si $\boldsymbol{x}\in P_i$ définit un facteur mesuré. Or, puisque le système est totalement ergodique, il n'admet pas de facteur fini. La fonction $p^{T}$ est donc strictement croissante.

On conclut alors en remarquant qu'il faut échanger au moins deux morceaux pour conjuguer le système ergodique du tore $\overline{\mathcal{T}}$ avec le système symbolique engendré par l'échange de morceaux.
\end{proof}

\section{Preuve du théorème \ref{th:1}}
\label{se:preuve1}

\begin{lemme} \label{le:1}
Si $T$ est une translation du plan telle que $\overline{\mathcal{T}}$ soit ergodique, alors $p^{T} (1)\geq 3$.
\end{lemme}

\begin{proof}[Preuve du lemme \ref{le:1}]
Fixons deux réels $r$ et $s$ tels que la translation $T:(x,y) \mapsto (x+r,y+s)$ soit ergodique sur le tore $\mathbb R^2/\mathbb Z^2$ (c'est-à-dire pour tout couple d'entiers $(n,m)$ non-nuls simultanément, $nr+ms \notin \mathbb Z$).\\

Supposons que l'application $T$ soit  conjuguée à un échange de deux morceaux $D_1$ et $D_2$, de mesure strictement positive (respectivement $\alpha_1$ et $\alpha_2$). Puisque $\mu(D_1\cup D_2)=1$, et que $\mu(D_1\cap D_2)=0$, on a immédiatement $\alpha_1+\alpha_2=1$. Par le théorème ergodique de Birkhoff et de récurrence de Poincaré, il existe un point récurent $\boldsymbol{x}_0=(x_0,y_0)$ de $D$, tel que 
\[
\frac{1}{N} \sum \limits_{n=0} ^{N-1} \mathds{1}_{R^{-n} D_1} (\boldsymbol{x}_0) \to \alpha_1 \mbox{ et } \frac{1}{N} \sum \limits_{n=0} ^{N-1} \mathds{1}_{R^{-n} D_2} (\boldsymbol{x}_0) \to \alpha_2.
\]

Traduisons maintenant la condition de récurrence imposée sur le point $\boldsymbol{x}_0$. Pour tout entier $N$ :
\[
R^N (\boldsymbol{x}_0) = 
\begin{pmatrix}
x_0+Nr- n_1\sum \limits_{n=0} ^{N-1} \mathds{1}_{R^{-n} D_1} (\boldsymbol{x}_0) -n_2 \sum \limits_{n=0} ^{N-1} \mathds{1}_{R^{-n} D_2} (\boldsymbol{x}_0)  \\
y_0+Ns- m_1\sum \limits_{n=0} ^{N-1} \mathds{1}_{R^{-n} D_1} (\boldsymbol{x}_0) -m_2 \sum \limits_{n=0} ^{N-1} \mathds{1}_{R^{-n} D_2} (\boldsymbol{x}_0) 
\end{pmatrix}.
\]

Puisque le point $\boldsymbol{x}_0$ est récurent, il existe une sous-suite croissante d'entiers $(N_k)_k$ telle que $R^{N_k} (\boldsymbol{x}_0) /N_k$ converge vers $\boldsymbol{0}$. Ce qui nous amène aux équations :
\[
r = n_1\alpha_1 + n_2 \alpha_2 \mbox{ et }s = m_1\alpha_1 + m_b \alpha_2.
\]
Nous trouvons donc : 
\[
r = n_1 +\alpha_2 ( n_2-n_1) \mbox{ et } s = m_1+\alpha_2 (m_2-m_1).
\]
En remarquant que les quantités $n_2-n_1$ et $m_2-m_1$ sont non nulles (sinon $r$ ou $s$ serait entier), nous trouvons alors que $(m_2-m_1)r-(n_2-n_1)s\in \mathbb Z$, ce qui est absurde.
\end{proof}

\begin{proposition} \label{pr:1}
Si $T$ est une translation du plan telle que $\overline{\mathcal{T}}$ soit ergodique, alors $p^{T} (2)\geq 5$.
\end{proposition}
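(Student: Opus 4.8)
The plan is to prove the stronger statement that \emph{every} exchange $\mathcal{R}$ conjugate to $T$ satisfies $p_{\mathcal{R}}(2)\geq 5$, so that taking the infimum gives $p^{T}(2)\geq 5$. Fix such an $\mathcal{R}$, with pieces $D_1,\dots,D_m$ that we may assume all of positive measure (so $p_{\mathcal{R}}(1)=m$), write $R|_{D_i}=T-\boldsymbol{n}_i$ with $\boldsymbol{n}_i=(n_i,m_i)\in\mathbb{Z}^2$, and set $\alpha_i=\mu(D_i)$. I would begin with two reductions. First, an ergodic translation is in fact totally ergodic: if $1,r,s$ are rationally independent then so are $1,kr,ks$ for every $k\geq 1$, whence each $\overline{\mathcal{T}}^k$ is ergodic. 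Thus the argument of Proposition \ref{prop:n+1} applies to $\mathcal{R}$ and shows that $p_{\mathcal{R}}$ is strictly increasing, so $p_{\mathcal{R}}(2)\geq m+1$. By Lemme \ref{le:1}, $m\geq 3$. Consequently, if $m\geq 4$ we already have $p_{\mathcal{R}}(2)\geq 5$; and the value $p_{\mathcal{R}}(2)=3$ is excluded by strict growth. The entire difficulty is therefore concentrated in the single configuration $m=3$, $p_{\mathcal{R}}(2)=4$, from which I must derive a contradiction.

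The next step is to encode this configuration combinatorially. A word $ij$ lies in $\mathfrak{L}_{\mathcal{R}}$ precisely when $E_{ij}:=D_i\cap R^{-1}D_j$ has positive measure; put $\beta_{ij}=\mu(E_{ij})$. Since $R$ preserves $\mu$ and the $D_j$ partition $D$, summing over one index yields the two flow relations
\[
\sum_{j}\beta_{ij}=\alpha_i,\qquad \sum_{i}\beta_{ij}=\alpha_j\qquad(1\leq i,j\leq 3),
\]
so $(\beta_{ij})$ is a stationary flow on the directed graph $G$ with vertex set $\{1,2,3\}$ and with the four positive $\beta_{ij}$ as edges. Ergodicity of $\overline{\mathcal{T}}$ forbids a proper nonempty invariant union of cylinders, which forces $G$ to be connected.

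The key step, and the one I expect to be the main obstacle, is to convert the sparsity of $G$ (only four edges) into a genuine \emph{rational} constraint on the measures $\alpha_i$. A connected digraph with $4$ edges and $3$ vertices has a flow (circulation) space of dimension $4-3+1=2$; intersecting with the normalization $\sum\beta_{ij}=1$ leaves a one-parameter affine family of solutions, defined over $\mathbb{Q}$ since $G$ and the incidence relations are purely combinatorial. The out-sums $\alpha_i=\sum_j\beta_{ij}$ are then rational-affine functions of this single parameter, so $(\alpha_1,\alpha_2,\alpha_3)$ traverses a rational affine line inside the plane $\{\sum_i\alpha_i=1\}$. Writing $\alpha_i=a_i+b_i t$ with $a_i,b_i\in\mathbb{Q}$, $(b_i)\neq 0$ and $\sum_i b_i=0$, this is a nontrivial integer relation among the $\alpha_i$. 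The delicate points here are justifying connectivity from ergodicity and keeping track of a possibly degenerate parameter direction, which is precisely what the final step must handle.

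To conclude, I would reuse the recurrence mechanism of Lemme \ref{le:1} verbatim: Birkhoff's theorem together with Poincar\'e recurrence furnish a recurrent point whose visit-frequencies to the $D_i$ converge to $\alpha_i$, forcing $(r,s)=\sum_i\alpha_i\boldsymbol{n}_i$. Substituting $\alpha_i=a_i+b_it_0$ gives $r=\sum_i a_in_i+t_0\sum_i b_in_i$ and $s=\sum_i a_im_i+t_0\sum_i b_im_i$, where all the displayed coefficients are rational. If $\big(\sum_i b_in_i,\sum_i b_im_i\big)\neq(0,0)$, eliminating $t_0$ produces a relation $\lambda r+\mu s\in\mathbb{Q}$ with $(\lambda,\mu)\in\mathbb{Z}^2\setminus\{0\}$, contradicting the rational independence of $1,r,s$; if instead that pair vanishes, then $r=\sum_i a_in_i\in\mathbb{Q}$, which is equally impossible for an ergodic translation. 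Either way the configuration $m=3$, $p_{\mathcal{R}}(2)=4$ cannot occur, so $p_{\mathcal{R}}(2)\geq 5$ for every conjugate exchange, and hence $p^{T}(2)\geq 5$.
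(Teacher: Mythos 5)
Your proof is correct, and it shares the paper's overall skeleton: the same reduction (via Lemme~\ref{le:1} and the strict growth of $p_{\mathcal{R}}$ from Proposition~\ref{prop:n+1}, whose total-ergodicity hypothesis you rightly verify for ergodic translations) to the single configuration of three pieces with $p_{\mathcal{R}}(2)=4$, and the same concluding mechanism (Birkhoff plus Poincar\'e recurrence yielding $(r,s)=\sum_i\alpha_i\boldsymbol{n}_i$, then a rational relation contradicting ergodicity). Where you genuinely diverge is the middle step. The paper observes that exactly one piece, say $D_1$, refines, so that $R(D_2)$ and $R(D_3)$ are a.e.\ contained in single pieces; after ergodicity excludes $R(D_2)\subset D_2$ and $R(D_3)\subset D_3$, it enumerates the three possible transition digraphs in a table and, in each case, reads off forced equalities among $\alpha_1^{(1)},\alpha_1^{(2)},\alpha_2,\alpha_3$ from measure preservation, which make $r$ and $s$ rationally dependent. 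You replace this enumeration by a single linear-algebra fact: the transition measures $\beta_{ij}$ form a circulation on the transition graph $G$, ergodicity forces $G$ to be connected, hence the circulation space has dimension $E-V+1=2$, the normalized circulations form a rational affine line, and the $\alpha_i$ are rational-affine functions of one parameter --- exactly the structural conclusion the paper extracts case by case. Your route buys uniformity and generality: the same count ($E-V+1=2$ whenever $E=V+1$) would dispose of the induction step of Th\'eor\`eme~\ref{th:1} ($2n+1$ cylinders, $2n+2$ transitions) without the case and sub-case analysis carried out there; the paper's route is more elementary and displays the forced measure equalities explicitly.

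Two details to tighten. First, the claim that $ij\in\mathfrak{L}_{\mathcal{R}}$ \emph{precisely} when $\mu(D_i\cap R^{-1}D_j)>0$ is an overstatement: the language is generated by all coded orbits, so a word can occur along a null set of orbits. What is true, and all you need, is the implication $\beta_{ij}>0\Rightarrow ij\in\mathfrak{L}_{\mathcal{R}}$, which gives that $G$ has \emph{at most} four edges. If it has only three, then all out-degrees and in-degrees equal $1$, so $G$ is a connected permutation digraph, the normalized circulation is unique and rational, all $\alpha_i=1/3$, and your contradiction applies a fortiori. Second, the reduction to pieces of positive measure deserves one line: if $p_{\mathcal{R}}(1)=3$ (the only case left after strict growth), the pieces carrying positive measure number exactly three by the argument of Lemme~\ref{le:1}, and your digraph argument runs on those pieces alone, since a.e.\ orbit never meets the null ones.
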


\begin{proof}[Preuve de la proposition \ref{pr:1}]
Fixons deux réels $r$ et $s$ tels que la translation $T:(x,y) \mapsto (x+r,y+s)$ soit ergodique sur le tore $\mathbb R^2/\mathbb Z^2$, et $\mathcal{R}$ un échange de trois morceaux conjugué à $T$, tel que $p_{\mathcal{R}}(2)\leq 4$. \\

Par la proposition \ref{prop:n+1}, la fonction complexité  $p_{\mathcal{R}}$ est une fonction strictement croissante. De plus, nous avons vu dans le lemme \ref{le:1} que  $p_{\mathcal{R}}(1)=3$. Il nous faut donc montrer qu'il est impossible d'avoir un échange de morceaux $\mathcal{R}$ conjugué à $T$ tel que $p_{\mathcal{R}}(1)=3$ et $p_{\mathcal{R}}(2)=4$.\\

Supposons qu'il existe $\mathcal{R}$, un échange de trois morceaux $D_1$, $D_2$ et $D_3$ de mesures respectives $\alpha_1$, $\alpha_2$ et $\alpha_3$, adapté à ${T}$, tel que $p_{\mathcal{R}}(2)=4$. Nécessairement, un seul des trois morceaux se raffine quand on s'interesse à la pré-image de la partition formée par les trois morceaux $D_1$, $D_2$ et $D_3$. Supposons donc que le morceau $D_1$ se raffine en deux morceaux $D_1^{(1)}$ et $D_1^{(2)}$, de mesure $\alpha_1^{(1)}$ et $\alpha_1^{(2)}$. En reprenant les arguments de la preuve du lemme \ref{le:1}, nous pouvons montrer qu'il existe six entiers $n_1$, $n_2$, $n_3$, $m_1$, $m_2$ et $m_3$, tels que :
\begin{equation} \label{eq:pour2}
\left\{ \begin{array}{ccccc}
& & 1  & =&\alpha_1^{(1)} +\alpha_1^{(2)}+\alpha_2+\alpha_3 ,\\
&&r &=& \alpha_1^{(1)} n_1+\alpha_1^{(2)} n_1+\alpha_2 n_2+  \alpha_3 n_3,\\
&\mbox{et}&  s &=& \alpha_1^{(1)} m_1+\alpha_1^{(2)} m_1+\alpha_2 m_2  +\alpha m_3.
\end{array} \right.
\end{equation}

Puisque les morceaux $D_2$ et $D_3$ ne se raffinent pas, il existe $(i,j)\in\{1,2,3\}^2$ tels que $R(D_2)\subset D_i$ et $R(D_3)\subset D_j$. Remarquons immédiatement que par ergodicité, et puisque $R$ préserve la mesure, il est impossible que $R(D_2)\subset D_2$, ou que $R(D_3)\subset D_3$. Quitte à renuméroter $D_1^{(1)}$ et $D_1^{(2)}$, ou $2$ et $3$, il n'y a que trois cas possibles :
\[
\renewcommand{\arraystretch}{1.4} 
\begin{array}{|c||c|c|c|c|c|}
\hline
 \mbox{Cas 1} &   R(D_1^{(2)}) \subset D_3  & R(D_1^{(1)}) \subset D_2     & R(D_2)     \subset D_1     &  R(D_3)     \subset D_1    \\
 \hline
 \mbox{Cas 2} & R(D_1^{(2)}) \subset  D_3      & R(D_1^{(1)}) \subset  D_1     & R(D_2) \subset D_1           &R(D_3) \subset D_2        \\
 \hline
\mbox{Cas 3}  &  R(D_1^{(2)}) \subset D_3 &  R(D_1^{(1)}) \subset D_2 & R(D_2)  \subset  D_1  & R(D_3) \subset D_2 \\
\hline
\end{array}
\]

Nous allons étudier chacun des cas et vérifier qu'ils mènent tous à une absurdité. Pour cela, nous allons traduire les répercussions de ces inclusions sur les valeurs de $\alpha_1^1$, $\alpha_1^2$, $\alpha_2$ et $\alpha_3$ ; en utilisant le fait que l'application $R$ préserve la mesure de Lebesgue. \\

\textbf{Premier cas.}
Puisque $R(D_3)\cup R(D_2) \subset D_1$, alors $\mu(R(D_3)\cup R(D_2) )\leq \alpha_1=\alpha_1^{(1)}+\alpha_1^{(2)}$. De plus, $\mu(R(D_3)\cup R(D_2) )\leq \alpha_2+\alpha_3$. De même, $\alpha_1^{(1)}\leq \alpha_2$ et $\alpha_1^{(2)}\leq \alpha_3$. Puisque $\alpha_1^{(1)}+\alpha_1^{(2)}+\alpha_2+\alpha_3=1$,  alors nécessairement :
\[
\alpha_2+\alpha_3=\alpha_1^{(1)}+\alpha_1^{(2)} , \ \alpha_1^{(1)}= \alpha_2 \mbox{ et } \alpha_1^{(2)}= \alpha_3. 
\]
Le système \ref{eq:pour2} devient alors :
\[
1/2   = \alpha_2+\alpha_3,\ r = \alpha_2 (n_1+n_b)+  \alpha_3 (n_1+n_3)  \mbox{ et } s  = \alpha_2(m_1+ m_2)  +\alpha_3 (m_1+m_3).
\]
Comme nous l'avons vu dans la preuve du lemme \ref{le:1}, les réels $r$ et $s$ sont alors rationnellement dépendants et le système $\overline{\mathcal{T}}$ n'est donc pas ergodique. C'est absurde. \\

\textbf{Deuxième cas.}
L'étude des inclusions nous amène aux conditions suivantes :  
\[
\alpha_3=\alpha_2, \ \alpha_2+\alpha_1^{(1)}=\alpha_1=\alpha_1^{(1)}+\alpha_1^{(2)} \mbox{ et } \alpha_1^{(2)}=\alpha_3. 
\]
Donc $\alpha_3=\alpha_2=\alpha_1^{(2)}$ et le système \ref{eq:pour2} devient alors :
\[
1   = \alpha_1^{(1)}+ 3\alpha_3 ,\ r = \alpha_1^{(1)} n_1 +  \alpha_3 (n_1+n_2+n_3)  \mbox{ et }  s = \alpha_1^{(1)}m_1  +\alpha_3 (m_1+m_2+m_3).
\]
Comme précédemment, cela nous amène à une absurdité. \\

\textbf{Troisième cas.}
L'étude des inclusions impose les conditions suivantes :  
\[
\alpha_3+\alpha_1^{(1)}=\alpha_2 , \ \alpha_2=\alpha_1=\alpha_1^{(1)}+\alpha_1^{(2)} \mbox{ et } \alpha_1^{(2)}=\alpha_3. 
\]
Donc $\alpha_3=\alpha_2=\alpha_1^{(2)}$ et le système \ref{eq:pour2} s'écrit :
\[
1  = 2 \alpha_2+ \alpha_3 , \ r = \alpha_2(n_1+n_2) +  \alpha_3 n_3  \mbox{ et } s = \alpha_2(m_1+m_2) +  \alpha_3 m_3 .
\]
Encore comme précédemment, cela nous amène à une contradiction.
\end{proof}

\begin{proof}[Preuve du théorème \ref{th:1}]
Fixons deux réels $r$ et $s$ tels que la translation $T:(x,y) \mapsto (x+r,y+s)$ soit ergodique sur le tore $\mathbb R^2/\mathbb Z^2$, ainsi qu'un échange de morceaux $\mathcal{R}$ conjugué à $T$. Fixons également un entier $n$ jusqu'auquel le résultat du théorème est vrai et montrons que le résultat persiste jusqu'au cran $n+1$.  

Comme dans la preuve de la proposition \ref{pr:1}, il y a des situations trivialement dégénérées. Supposons par la suite que pour tout entier $i\in\{1,\dots,n\}$, $p_{\mathcal{R}}(i)= 2i+1$, et montrons qu'il est impossible que $p_{\mathcal{R}}(n+1)$ soit égale à $2n+2$.

Le $(n-1)$-ième raffinement des morceaux initiaux est donc composé de $2n+1$ morceaux notés $(D_1, \ldots, D_{2n+1})$ de mesure $\alpha_1,\dots,\alpha_{2n+1}$. Un seul de ces $2n+1$ morceaux se raffine quand on s'interesse à la $n$-ième pré-image de la partition formée par les trois morceaux initiaux. Supposons donc que le morceau $D_1$ se raffine en deux morceaux $D_1^{(1)}$ et $D_1^{(2)}$, de mesure $\alpha_1^{(1)}$ et $\alpha_1^{(2)}$. Comme précédemment, pour tout $i\in\{2,\dots,2n+1\}$, il existe $j\in\{1,\dots,2n+1\}$ tel que $R(D_i)\subset D_j$.

Nous pouvons utiliser les arguments du lemme \ref{le:1} pour monter qu'il existe des entiers $(n_i)_{i=1\dots2n+1}$ et $(m_i)_{i=1\dots2n+1}$ tels que :
\begin{equation} \label{eq:pourn}
\left\{ \begin{array}{ccccll}
& & 1  & =&\alpha_1^{(1)} + \alpha_1^{(2)}+\alpha_2+\alpha_3+\dots+\alpha_{2n+1} ,\\
&&r &=& \alpha_1^{(1)} n_1+\alpha_1^{(2)} n_1+\alpha_2 n_2+ \dots+ \alpha_{2n+1}n_{2n+1} ,\\
&\mbox{et}&  s &=& \alpha_1^{(1)} m_1+\alpha_1^{(2)} m_1+\alpha_2 m_2+ \dots+ \alpha_{2n+1}m_{2n+1} .
\end{array} \right.
\end{equation}

Quitte à réindexer les indices , deux cas peuvent alors se produire :
\[
\begin{array}{clcl}
&\mbox{ Cas 1 : }&  R\big{(}D_1^{(1)}\big{)} \subset D_1 \mbox{ et } R\big{(}D_1^{(2)}\big{)} \subset D_2. \\
&\mbox{ Cas 2 : }&  R\big{(}D_1^{(1)}\big{)} \subset D_2 \mbox{ et } R\big{(}D_1^{(2)}\big{)} \subset D_3.
\end{array}
\]

\textbf{Premier cas.} C'est le cas le plus simple : une seule zone peut rentrer dans la zone $D_1$. Quitte à réindexer les indices, on peut voir que puisque le système $\mathcal{R}$ est ergodique, il n'est pas possible que la réunion de certaines zones soit envoyée dans elle-même. Donc $R(D_2)\subset D_3$, $R(D_3)\subset D_4$, $\dots$, $R(D_{2n}) \subset D_{2n+1}$ et $R(D_{2n+1}) \subset D_{1}$. Ce qui nous amène aux contraintes suivantes sur les coefficients $\alpha_i$ : 
\[
\alpha_1^{(1)}+\alpha_{2n+1}=\alpha_1^{(1)}+\alpha_1^{(2)}, \ \alpha_1^{(2)}= \alpha_2=\alpha_3, \ \dots \dots, \alpha_{2n-1}=\alpha_{2n} \mbox { et } \alpha_{2n}=\alpha_{2n+1}.
\]
Soit encore : $\alpha_1^{(2)}=\alpha_2=\alpha_3=\dots=\alpha_{2n+1}$. Le système \ref{eq:pourn} se réécrit donc :
\[
1  = \alpha_1^{(1)} +(2n+1) \alpha_1^{(2)}, \ r = \alpha_1^{(1)} n_1+\alpha_1^2 N \mbox{ et } s = \alpha_1^{(1)} m_1+\alpha_1^2 M,
\]
où $N = \sum \limits_{i=1}^{2n+1} n_i $ et $M = \sum \limits_{i=1}^{2n+1} m_i$.
On conclut alors comme la preuve de la proposition \ref{pr:1}. \\

\textbf{Deuxième cas.} 
C'est plus compliqué. En effet, deux zones peuvent rentrer dans la zone $D_1$. Comme dans la preuve de la proposition \ref{pr:1}, quitte à réindexer les indices, deux sous-cas peuvent se produire.\\

\begin{tikzpicture}[scale=0.8]
\node at(-2.8,0.5) {\underline{Premier sous-cas} : };
 \node [black] (B) at (0,0.5) {$D_1^{(1)}$};  \node [black] (C) at (1.4,0.5) {$D_2$};       \node [black] (D) at (2.7,0.5) {$D_4$}; \node [black] (E) at (4.5,0.5) {$D_{2k}$}; 
\node [black] (BB) at (0,-0.5) {$D_1^{(2)}$};   \node [black] (CC) at (1.4,-0.5) {$D_3$};  \node [black] (DD) at (2.7,-0.5) {$D_5$};  \node [black] (EE) at (4.5,-0.5) {$D_{2k+1}$};   
\node [black] (F) at (6.5,0) {$D_{2k+2}$};  \node [black] (G) at (9,0) {$D_{2n+1}$};  \node [black] (H) at (10.5,0) {$D_{1}.$}; 
\draw[black,->] (B) -- (C); \draw[black,->] (C) --    (D); \draw[black,densely dashed,->] (D) --    (E); \draw[black,->] (E) --    (F); \draw[black,->] (BB) --    (CC); \draw[black,->] (CC) --    (DD);
\draw[black,densely dashed,->] (DD) --    (EE); \draw[black,->] (EE) --    (F); \draw[black,densely dashed,->] (F) --    (G); \draw[black,->] (G) --    (H);
\end{tikzpicture} 
\mbox{} \\

\begin{center} \begin{tikzpicture}[scale=0.8]
\node at (-2.8,0.5) {\underline{Deuxième sous-cas} :};
\node [black] (B) at (0,0.5) {$D_1^{(1)}$};  \node [black] (C) at (1.4,0.5) {$D_2$};  \node [black] (D) at (2.7,0.5) {$D_4$}; \node [black] (E) at (4.5,0.5) {$D_{2k}$};  \node [black] (F) at (6.5,0.5) {$D_{1},$};
 \node [black] (BB) at (0,-0.5) {$D_1^{(2)}$};  \node [black] (CC) at (1.4,-0.5) {$D_3$};  \node [black] (DD) at (2.7,-0.5) {$D_5$}; \node [black] (EE) at (4.5,-0.5) {$D_{2k+1}$}; 
\node [black] (FF) at (6.5,-0.5) {$D_{2k+2}$};  \node [black] (G) at (8.8,-0.5) {$D_{2n+1}$};  \node [black] (H) at (10.5,-0.5) {$D_{1}.$}; 
\draw[black,->] (B) --    (C); \draw[black,->] (C) --    (D); \draw[black,densely dashed,->] (D) --    (E); \draw[black,->] (BB) --    (CC); \draw[black,->] (CC) --    (DD); \draw[black,densely dashed,->] (DD) --    (EE);
\draw[black,->] (EE) --    (FF); \draw[black,->] (E) --    (F); \draw[black,densely dashed,->] (FF) --    (G); \draw[black,->] (G) --    (H);
\end{tikzpicture} \end{center}

\indent
\textbf{Premier sous-cas.} Les conditions sur les coefficients sont :
\[
\left\{ \begin{array}{cccccccccccc} \alpha_{2k}& =&\alpha_{2k-2}& =&\dots&=&\alpha_1^{(1)} , \\ \alpha_{2k+1} &=& \alpha_{2k-1}& =&\dots&=&\alpha_1^{(2)} , \\  \end{array} \right.
\]
et $\alpha_{2k}+\alpha_{2k+1} = \alpha_{2k+2}  = \dots = \alpha_{2n+1}=\alpha_1=\alpha_1^{(1)}+\alpha_1^{(2)}$.\\ 
Posons  $N_1=n_1+ \sum \limits _{i=1}^{k} n_{2i} +\sum \limits _{i=2k+2}^{2n+1} n_i$, $N_2=n_1+ \sum \limits _{i=1}^{k} n_{2i+1} +\sum \limits _{i=2k+2}^{2n+1} n_i$,  $M_1=m_1+ \sum \limits _{i=1}^{k} m_{2i} +\sum \limits _{i=2k+2}^{2n+1} m_i$,  et $M_2=m_1+ \sum \limits _{i=1}^{k} m_{2i+1} +\sum \limits _{i=2k+2}^{2n+1} m_i$. Le système \ref{eq:pourn} devient donc :
\[
\left\{ \begin{array}{ccccc} & & 1  & =& (2n-k+1) \alpha_1^{(1)} + (2n-k+1) \alpha_1^{(2)}   ,\\ &&r &=& \alpha_1^{(1)} N_1 +\alpha_1^{(2)} N_2,\\ &\mbox{et}&  s &=& \alpha_1^{(1)} M_1+\alpha_1^{(2)} M_2 . \end{array} \right.
\]
On conclut alors comme précédemment.\\

\textbf{Deuxième sous-cas.} Les conditions sur les coefficients sont :
\[
\left\{ \begin{array}{ll} 
\alpha_{2k}  = \alpha_{2k-2}  = \dots = \alpha_2 =\alpha_1^{(1)}, \\ \alpha_{2n+1}  =\alpha_{2n}=\dots=\alpha_{2k+1} =\dots =\alpha_3=\alpha_1^{(2)}, \\ \alpha_{2k}+\alpha_{2n+1}=\alpha_1=\alpha_1^{(1)}+\alpha_1^{(2)}.
\end{array} \right.
\]
En posant alors $N_1=n_1+ \sum \limits _{i=1}^{k} n_{2i}$, $N_2=n_1+ \sum \limits _{i=1}^{k} n_{2i+1} +\sum \limits _{i=2k+2}^{2n+1} n_i$, $M_1=m_1+ \sum \limits _{i=1}^{k} m_{2i}$, et $M_2=m_1+ \sum \limits _{i=1}^{k} m_{2i+1} +\sum \limits _{i=2k+2}^{2n+1} m_i$, le système \ref{eq:pourn} s'écrit :
\[
\left\{ \begin{array}{cccll} & & 1  & =&\alpha_1^{(1)}(k+1) + \alpha_1^{(2)}(2n-k+1) ,\\ &&r &=& \alpha_1^{(1)} N_1 +\alpha_1^{(2)} N_2,\\ &\mbox{et}&  s &=& \alpha_1^{(1)} M_1+\alpha_1^{(2)} M_2 . \end{array} \right.
\]
On conclut encore comme précédemment.
\end{proof}

\section{Preuve du théorème \ref{th:2}}
\label{se:preuve2}

Notons $\sigma$ la substitution de Fibonacci définie par 
\begin{equation} \label{eq:fibo}
\sigma \ : \ 1 \mapsto 12 \mbox{ et } 2 \mapsto 1.
\end{equation}
La preuve du théorème se déduit des deux résultats suivants. 

\begin{proposition} \label{prop:phi2}
Soit $\mathcal{R}$ un échange de deux morceaux $D_1$ et $D_2$ conjugué à ${T}_\phi$, défini par
\[
 R(\boldsymbol{x})=T_\phi (\boldsymbol{x}) \mbox{ si } \boldsymbol{x}\in D_1 \mbox{, et } R(\boldsymbol{x})=T_\phi(\boldsymbol{x})-(1,0) \mbox{ si } \boldsymbol{x}\in D_2. 
\]
En notant $\mathfrak p$ la projection de $\mathbb R^2$ dans $\mathbb R$ définie par $\mathfrak p(x,y)=x$, nous supposons de plus l'existence d'un réel $z$ tel que $\mathfrak p(D_1)\subset]z-1,z+1/\phi^2]$ et $\mathfrak p(D_2) \subset ]z-1/\phi^2,z+1/\phi^2[$. 

Alors, il existe alors un échange de deux morceaux $\mathcal{R}'$ conjugué à $\mathcal{T}_\phi$, de même nature que $\mathcal{R}$, tel que :
\begin{equation} \label{eq:langa}
\mathfrak L_{\mathcal{R}'} = \mathcal L(\sigma(\mathfrak L_{\mathcal{R}})).
\end{equation}
\end{proposition}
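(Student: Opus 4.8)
The plan is to realize $\mathcal{R}'$ as a geometric renormalization of $\mathcal{R}$ dictated by the Fibonacci rule $\sigma(1)=12$, $\sigma(2)=1$, using the self-similarity of $T_\phi$. Because $\alpha=1/\phi^2$ is a golden rotation number and the vertical constant $1/(2\phi^3)$ is tuned to it, the first-return map of $T_\phi$ to a suitable sub-domain should again be affinely conjugate to $T_\phi$ itself, the conjugacy being an affine map $\Lambda$ whose linear part is governed by the Fibonacci matrix $\left(\begin{smallmatrix}1&1\\1&0\end{smallmatrix}\right)$ together with the golden scaling. First I would isolate this $\Lambda$ explicitly, using only $\phi^2=\phi+1$ (and incidental identities such as $2/\phi^2-1=-1/\phi^3$), and check that it contracts Lebesgue measure by the factor $\mu(D_1')=1/\phi$.

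Next I would build $\mathcal{R}'$ as a cut-and-stack tower. Set $B:=\Lambda(D)$, a $1/\phi$-scaled copy of the old domain, split as $B_1:=\Lambda(D_1)$ and $B_2:=\Lambda(D_2)$, and declare the two new pieces to be $D_1':=B$ and $D_2':=R'(B_1)$, where on each piece $R'$ equals $T_\phi$ or $T_\phi-(1,0)$. This is exactly the tower of heights $(2,1)$ over the base $B$: a point of $B_1$ needs two $R'$-steps and a point of $B_2$ needs one $R'$-step to return to $B$, matching $|\sigma(1)|=2$ and $|\sigma(2)|=1$, and the bookkeeping is consistent since $\mu(D_1')=1/\phi$, $\mu(D_2')=\mu(B_1)=1/\phi^2$, and $\mu(D')=2\mu(B_1)+\mu(B_2)=1$. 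The decisive computation is to push the projection hypotheses $\mathfrak{p}(D_1)\subset\,]z-1,z+1/\phi^2]$ and $\mathfrak{p}(D_2)\subset\,]z-1/\phi^2,z+1/\phi^2[$ through $\Lambda$ and one application of $R'$, and read off a new center $z'$ with $\mathfrak{p}(D_1')\subset\,]z'-1,z'+1/\phi^2]$ and $\mathfrak{p}(D_2')\subset\,]z'-1/\phi^2,z'+1/\phi^2[$. These same bounds must also guarantee that the wrap $-(1,0)$ is \emph{uniform} on each new piece (so that $R'$ really is $T_\phi$ or $T_\phi-(1,0)$ on all of $D_i'$, after a possible relabelling) and that condition 3) of Section \ref{se:res} holds, i.e. $D'$ is a fundamental domain modulo $\mathbb{Z}^2$; this is precisely what the two tuned intervals are designed to provide.

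Having produced $\mathcal{R}'$ and verified it is adapted to $T_\phi$ (only condition 3) is nontrivial, and it is handled above), I would invoke Halmos' criterion, Proposition \ref{prop:raff}: since $\mathcal{R}$ is conjugate to $T_\phi$, the nested pre-image measures $\mu\big(\bigcap_{k=1}^{N}R^{-k}D_{u_k}\big)$ tend to $0$, and because every such intersection for $\mathcal{R}'$ is, up to the fixed factor $\mu(B)=1/\phi$ and the bounded tower heights, comparable to one for $\mathcal{R}$, the corresponding quantities for $R'$ also tend to $0$, so $\mathcal{R}'$ is conjugate to $T_\phi$. For the language identity \eqref{eq:langa} I would argue by two inclusions. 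For $\mathcal{L}(\sigma(\mathfrak{L}_{\mathcal{R}}))\subset\mathfrak{L}_{\mathcal{R}'}$: by construction of the tower the $\mathcal{R}'$-coding of $\Lambda(\boldsymbol{x})\in B$ is exactly $\sigma$ applied to the $\mathcal{R}$-coding of $\boldsymbol{x}$, so every $\sigma(w)$ with $w\in S_{\mathcal{R}}$, and hence every factor thereof, lies in $\mathfrak{L}_{\mathcal{R}'}$. For the reverse inclusion I would use the recognizability (unique desubstitution) of the Fibonacci substitution: any $\mathcal{R}'$-admissible word parses uniquely into $\sigma$-blocks whose index sequence is $\mathcal{R}$-admissible, whence it is a factor of some $\sigma(w)$ with $w\in\mathfrak{L}_{\mathcal{R}}$.

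The hard part will be the geometric core of the second paragraph: exhibiting $\Lambda$ explicitly and verifying that passing the two tuned projection intervals through $\Lambda$ and $R'$ reproduces intervals of the same two types (with a new center $z'$) while keeping $D'$ a fundamental domain and the wrap uniform on each piece. Everything hinges on the exact value of the vertical constant $1/(2\phi^3)$, so this step is genuinely delicate rather than routine; by comparison, the conjugacy via Proposition \ref{prop:raff} and the combinatorial language identity via recognizability of $\sigma$ are comparatively standard once the tower has been correctly set up.
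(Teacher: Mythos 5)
Your tower is structurally the correct one, and it is the same as the paper's: the paper takes $D_1'=\psi^{-1}(D)$, $D_2'=T_\phi(\psi^{-1}(D_1))$, so that the return time to the base is $2$ over the image of $D_1$ and $1$ over the image of $D_2$, exactly your heights matching $|\sigma(1)|=2$, $|\sigma(2)|=1$; the measures and the forward language inclusion work as you say. The fatal problem is the geometric core, and it is not that the step is ``delicate'': the renormalization you postulate \emph{cannot be affine}. Your plan requires the pointwise identities $T_\phi\circ\Lambda=\Lambda\circ(T_\phi-(1,0))$ on $D_2$ (height-$1$ column) and $(T_\phi^2-(1,0))\circ\Lambda=\Lambda\circ T_\phi$ on $D_1$ (height-$2$ column); since $D_1,D_2$ have positive measure, these must hold as identities of maps. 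Write $\Lambda(x,y)=(ax+by+c,\,dx+ey+f)$. The first coordinates force $b=0$ and $a=-1/\phi$. In the second coordinates, the skew term of $T_\phi$ adds the \emph{first} coordinate of $\Lambda$, namely $ax+c$, once in the first identity and twice in the second (because $T_\phi^2$ shears by $2u$), while on the right-hand sides the term $ey$ picks up $ex$ from $y\mapsto y+x+\mathrm{const}$ in both. Comparing coefficients of $x$ gives $e=a$ from the first identity and $e=2a$ from the second, hence $a=0$: a contradiction, whatever the value of the vertical constant. So the affine map $\Lambda$ on which your whole construction rests does not exist.

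This is precisely why the paper conjugates with a \emph{quadratic} map, $\psi^{-1}(x,y)=\bigl(-x/\phi,\,-y-x^2/(2\phi)+x/(2\phi^2)\bigr)$: a term $\alpha' x^2$ composed with the two different horizontal translations ($-1/\phi$ in the height-$1$ branch, $1/\phi^2$ in the height-$2$ branch) produces \emph{different} linear corrections $2\alpha'(\cdot)\,x$, which is exactly what absorbs the mismatch $e=a$ versus $e=2a$; solving yields $\alpha'=-1/(2\phi)$, and only then does the constant $1/(2\phi^3)$ fix the remaining coefficients. The missing idea, then, is that the renormalization of this affine nilsystem lives in the group of polynomial, not affine, transformations of the plane. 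Your remaining steps are reasonable but secondary: for the conjugacy the paper checks criterion 3) of Proposition \ref{prop:raff} (equal codings force equal $\psi$-preimage codings, by injectivity of $\sigma$ as a free-group automorphism, hence measure zero) rather than your estimate via criterion 2), and both language inclusions fall out of the tower directly, so recognizability of $\sigma$ is not needed; in any case none of this can be started until the conjugating map is taken in the correct, non-affine class.
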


\begin{proposition} \label{prop:phi1}
Il existe $\mathcal{R}^{(1)}$, un échange de deux morceaux bornés, connexes et simplement connexes $D_1^{(1)}$ et $D_2^{(1)}$ conjugué à ${T}_\phi$. Les mesures de $D_1^{(1)}$ et $D_2^{(1)}$ sont respectivement $1/\phi$ et $1/\phi^2$. L'application $R^{(1)}$ est définie par :
\[
R^{(1)}(\boldsymbol{x})=T_\phi(\boldsymbol{x}) \mbox{ si }\boldsymbol{x}\in D_1^{(1)}  \mbox{ , et  }R^{(1)}(\boldsymbol{x})=T_\phi (\boldsymbol{x})-(1,0)  \mbox{ si } \boldsymbol{x}\in D_2^{(1)}.
\]
De plus, il existe un réel $z$ tel que :
\[
\mathfrak p\left( D_1^{(1)}\right)\subset]z-1,z+1/\phi^2] \mbox{ et } \mathfrak p\left(D_2^{(1)} \right) \subset ]z-1/\phi^2,z+1/\phi^2[.
\]
\end{proposition}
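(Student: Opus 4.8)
The plan is to exhibit the seed exchange $\mathcal R^{(1)}$ by hand and then to read off the conjugacy from the Halmos criterion of Proposition~\ref{prop:raff}. I would first record the arithmetic of the golden mean that makes $T_\phi$ special: since $\phi^2=\phi+1$ one has $1/\phi+1/\phi^2=1$ and $1/\phi^3=2\phi-3$, whence $1/(2\phi^3)-\tfrac12=\phi-2=-1/\phi^2$. Consequently the interval $I=[-1/\phi^2,1/\phi)$ has length $1$ and mean $\tfrac12(1/\phi-1/\phi^2)=1/(2\phi^3)$, so that the vertical displacement $x\mapsto x-1/(2\phi^3)$ integrates to zero over $I$. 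This is exactly the centering needed for the $y$-coordinate to remain bounded under iteration, and it fixes the horizontal placement of the domain, hence the admissible $z$ in the two projection inclusions. On the first coordinate $T_\phi$ is the rotation by $\alpha=1/\phi^2$, whose coding by the two intervals of lengths $1/\phi$ and $1/\phi^2$ is the Fibonacci Sturmian system; the exchange $\mathcal R^{(1)}$ is meant to be the two-dimensional lift of this coding, with $D_1^{(1)}$ (measure $1/\phi$) the part carrying no translation and $D_2^{(1)}$ (measure $1/\phi^2$) the part on which one subtracts $(1,0)$.

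Second, I would construct $D:=D_1^{(1)}\cup D_2^{(1)}$ as an explicit bounded region and check the geometry directly. The point that dictates the shape is the following: one cannot take $D$ to be a graph domain $\{(x,y):x\in I,\ g(x)\le y<g(x)+1\}$ with the cut between $D_1^{(1)}$ and $D_2^{(1)}$ along a vertical line, because invariance under $R^{(1)}$ would force $g(x+\alpha)-g(x)=x-1/(2\phi^3)$, and this cohomological equation has no bounded solution for the golden rotation (its Birkhoff sums, being those of a centered sawtooth over a rotation with bounded partial quotients, grow like $\log N$). I would therefore take $D$ genuinely two-dimensional, of horizontal width larger than one period --- which is why the statement only asks $\mathfrak p(D_1^{(1)})\subset(z-1,z+1/\phi^2]$ and $\mathfrak p(D_2^{(1)})\subset(z-1/\phi^2,z+1/\phi^2)$ rather than inclusions in an interval of length $1$ --- with $\mathbb Z^2$-injectivity carried by the $y$-coordinate. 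The boundary between $D_1^{(1)}$ and $D_2^{(1)}$ is then \emph{transverse to the vertical direction}: whether the image of a point needs the correction $(1,0)$ to fall back into $D$ depends on which $\mathbb Z^2$-copy of the $x$-fibre it lands in, hence on $y$ as well as on $x$. With such a region the verifications that $D_1^{(1)},D_2^{(1)}$ are bounded, connected and simply connected, have measures $1/\phi$ and $1/\phi^2$, and satisfy the projection inclusions are routine, as is the check that $R^{(1)}(D_1^{(1)})\cup R^{(1)}(D_2^{(1)})=D$ up to measure zero; this last identity says precisely that $\mathcal R^{(1)}$ is adapted to $T_\phi$ in the sense of Section~\ref{se:res}.

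Third, to upgrade adaptedness to conjugacy I would verify assertion~3) of Proposition~\ref{prop:raff}, namely that the coding is almost everywhere injective; here boundedness does the work. If $P\neq P'$ share a code, then $R^{(1)}{}^{n}(P)$ and $R^{(1)}{}^{n}(P')$ lie in the same piece for every $n$, so their difference evolves under the linear part $L=\left(\begin{smallmatrix}1&0\\1&1\end{smallmatrix}\right)$ alone (the translations cancel): writing $\delta=P-P'=(\delta_x,\delta_y)$ one gets $R^{(1)}{}^{n}(P)-R^{(1)}{}^{n}(P')=L^n\delta=(\delta_x,\,n\delta_x+\delta_y)$. As both orbits stay in the bounded set $D$, the second components remain bounded, which forces $\delta_x=0$. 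Thus $P$ and $P'$ differ only in $y$ and sit on a common vertical line at fixed distance $\delta_y\neq0$, never separated by the partition. Because the boundary between $D_1^{(1)}$ and $D_2^{(1)}$ is transverse to the verticals and the system $\overline{\mathcal T}_\phi$ is ergodic, the orbit of almost every starting point equidistributes in $D$ and must eventually meet a position where that boundary falls strictly between the two levels; hence the set of never-separated pairs has measure zero, $C$ is a.e. injective, and $\mathcal R^{(1)}$ is conjugate to $T_\phi$.

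The main obstacle is the explicit construction of the second step: producing a \emph{bounded} fundamental domain that closes up under $R^{(1)}$ while only ever correcting by $(1,0)$, even though the linear part of $T_\phi$ shears the vertical slope by $+1$ at each step. The cohomological obstruction shows that no vertical-slice domain can work and forces the region to spread over more than one period in $x$; getting the two pieces to reassemble exactly, and to be connected and simply connected, is the delicate, essentially combinatorial-geometric part, and it is also what makes the transversality of the cut available for the injectivity argument. Once the region is in hand, the dynamical content --- adaptedness and the Halmos criterion --- follows from the shear computation above.
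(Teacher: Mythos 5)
Your proposal correctly identifies the strategy (explicit bounded domain, then the Halmos criterion of Proposition~\ref{prop:raff}), and your key structural insight is genuinely right: no graph domain with a vertical cut can work, because the centered Birkhoff sums $\sum_k\bigl(\{x_0+k/\phi^2\}-\tfrac12\bigr)$ are unbounded (this is exactly the Pinner-based argument the paper gives in its final remark), so the decision of when to subtract $(1,0)$ must depend on $y$ as well as $x$. Your verification of injectivity is also sound and even a little slicker than the paper's in one place: you force $\delta_x=0$ from the shear $L^n\delta=(\delta_x,\,n\delta_x+\delta_y)$ and boundedness of $D$, whereas the paper forces $x_1=x_2$ by minimality combined with the projection bound $\mathfrak p(D_2^{(1)})\subset\,]z-1/\phi^2,z+1/\phi^2[$; for the vertical coordinate both arguments use the transversality of the cut plus minimality/ergodicity.

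However, there is a genuine gap, and you name it yourself: you never construct the domain, and the proposition is precisely an existence statement. Saying the region should be ``genuinely two-dimensional, of horizontal width larger than one period'' and that the remaining verifications are ``routine'' is not a proof --- everything (boundedness, connectedness, the measures $1/\phi$ and $1/\phi^2$, the projection inclusions, and above all the fact that $R^{(1)}$ maps $D$ back onto $D$ with only the correction $(1,0)$) hinges on an explicit region, and producing one is the entire content of the proposition. The missing idea is visible in your own obstruction analysis: the equation $g(x+\alpha)-g(x)=x-c$ with $\alpha=1/\phi^2$ has no bounded \emph{periodic} solution, but in the plane no periodicity is required, and it is solved exactly by a quadratic with leading coefficient $1/(2\alpha)=\phi^2/2$. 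The paper exploits this by taking $z=1/(2\phi)+1/(2\phi^3)$, the parabolas
\[
p(x)=\phi^2\frac{x^2}{2}-\phi\frac{x}{2}-\frac{1}{\phi},\qquad q(x)=p(x)+\phi^2x+\frac32,\qquad r(x)=p(x)-\phi^2x+1+\frac{1}{2\phi^3},
\]
and defining $D_1^{(1)}$ as the part of the unit-height band $\{p(x)<y\leq p(x)+1\}$ lying below $\min\bigl(q(x),r(x)-1\bigr)$, and $D_2^{(1)}$ as the part of that band with $r(x)-1<y\leq r(x)$; the shear is then compensated exactly along the parabola $p$, and the curves $q$ and $r$ (not vertical lines) realize the $y$-dependent cut you anticipated. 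Without this (or an equivalent) explicit construction, your third step has nothing to run on, so the proposal as written does not establish the proposition.
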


\begin{proof}[Preuve de la proposition \ref{prop:phi2}]
Nous allons construire explicitement l'échange de morceaux $\mathcal{R}'$. Pour cela, nous définissons une application $\psi:\mathbb R^2\longrightarrow \mathbb R^2$ par 
\[
\psi(x,y)=(-\phi x, -y+\alpha x^2+\beta x+\gamma),
\]
où $\alpha$, $\beta$ et $\gamma$ sont des paramètres réels que nous fixerons plus tard. L'application $\psi$ est bijective et son inverse est :
\[
\psi ^{-1} (x,y)=( -x/ \phi, -y+\alpha x^2/\phi^2-\beta x/\phi+\gamma). 
\]
Nous posons alors :
\[
D_1 '=\psi^{-1}(D)=\psi^{-1}(D_1\cup D_2), \ D_2' = T_\phi(\psi^{-1}(D_1)) \mbox{ et }D'=D_1'\cup D_2'.
\]
Nous considérons l'application $R'$ définie \textit{a priori} de $D'$ dans $\mathbb R^2$, par 
 \[
 R'(\boldsymbol{x})=T_\phi(\boldsymbol{x}) \mbox{ si } \boldsymbol{x}\in D_1' \mbox{, et } R'(\boldsymbol{x})=T_\phi(\boldsymbol{x})-(1,0) \mbox{ si } \boldsymbol{x}\in D_2' \setminus D_1'.
 \]
 La suite de la preuve est de vérifier que cet échange de morceaux convient pour certaines valeurs des paramètres $\alpha$, $\beta$ et $\gamma$.\\

\textbf{L'application $R'$ est à valeur dans $D'$.} 
Commençons par fixer un élément  $\boldsymbol{x}$ de $\psi^{-1} (D_2)$. Il existe donc $(x,y)\in D_2$ tel que $\boldsymbol{x}= \psi^{-1} (x,y) = (-x/\phi,-y+\alpha x^2/\phi^2-\beta x/\phi+\gamma)$. Nous allons montrer que $T_\phi ( \boldsymbol{x}) \in \psi^{-1} (D)$. 
\[
\begin{array}{clcl}
T_\phi ( \boldsymbol{x})  &=& \begin{pmatrix} -x/\phi +1/\phi^2 \\ -y+\alpha x^2/\phi^2-\beta x/\phi+\gamma -x/\phi-1/(2\phi^3) \end{pmatrix},\\
&=& \begin{pmatrix} -(x +1/\phi^2-1)/\phi \\ -y+\alpha x^2/\phi^2-\beta x/\phi+\gamma -x/\phi-1/(2\phi^3) \end{pmatrix} . \end{array}
\]
Nous allons vérifier qu'il est possible de fixer les paramètres $\alpha$, $\beta$ et $\gamma$, afin que  $T_\phi ( \boldsymbol{x})$ soit égal à $\psi^{-1}(R (x,y ))$. Calculons :
\[
\begin{array}{clll} 
\psi^{-1}(R (x,y )) &=& \psi^{-1} \begin{pmatrix} x +1/\phi^2-1 \\ y+x-1/(2\phi^3) \end{pmatrix} , \\
                               &=& \begin{pmatrix} -(x +1/\phi^2-1)/\phi \\ -y-x+1/(2\phi^3)+\alpha (x-1/\phi)^2/\phi^2-\beta (x-1/\phi)/\phi+\gamma \end{pmatrix} .
\end{array}
\]
Nous voulons donc que l'équation suivante soit satisfaite :
\[
\begin{array}{cc}
-y+\alpha x^2/\phi^2-\beta x/\phi+\gamma -x/\phi-1/(2\phi^3)=-y-x+1/(2\phi^3) +\alpha (x-1/\phi)^2/\phi^2 \\
-\beta (x-1/\phi)/\phi+\gamma.\end{array}
\]
Soit encore : 
\[
\renewcommand{\arraystretch}{1.4} 
\begin{array}{cc}
 \alpha x^2/\phi^2-\beta x/\phi -x/\phi-1/(2\phi^3) = -x+1/(2\phi^3)+\alpha (x-1/\phi)^2/\phi^2-\beta (x-1/\phi)/\phi , \\
 \begin{array}{ccll}
&\Longleftrightarrow & -x/\phi-1/(2\phi^3) = -x+1/(2\phi^3) -2\alpha x/\phi^3+\alpha/\phi^4+\beta /\phi^2, \\
&\Longleftrightarrow &\left\{ \begin{array}{clclc} -1/\phi & = & -1-2\alpha/\phi^3, \\ -1/(2\phi^3)& = &1/(2\phi^3)+\alpha/\phi^4+\beta/\phi^2, \end{array} \right. \\
&\Longleftrightarrow &\left\{ \begin{array}{clclc}  \alpha  & = & -\phi/2, \\ \beta & = & -1/(2\phi). \end{array} \right.
\end{array}
\end{array}
\]
Nous fixons donc pour toute la suite de cette preuve :
\[
\psi(x,y) = (-\phi x, -y-\phi x^2/2+\phi^2 x/2),
\]
 d'inverse $\psi ^{-1} (x,y)=( -x/ \phi, -y- x^2 /( 2\phi)+x/(2\phi^2))$.\\
 
Soit maintenant $\boldsymbol{x}\in D_1'=T_\phi( \psi^{-1} (D_1))$. Il existe donc $(x,y)\in D_1$ tel que 
\[
\boldsymbol{x}= T_\phi (\psi^{-1} (x,y) )= ( -x/ \phi+1/\phi^2,  -y- x^2 /( 2\phi)+x/(2\phi^2)  -x/\phi-1/(2\phi^3)).
\]
Nous allons montrer que $R' ( \boldsymbol{x}) \in \psi^{-1} (D)$. 
\[
R' ( \boldsymbol{x})  = \begin{pmatrix}  -x/ \phi+1/\phi^2 +1/\phi^2-1 \\  -y- x^2 /( 2\phi)+x/(2\phi^2)  -x/\phi-1/(2\phi^3)  -x/\phi+1/\phi^2-1/(2\phi^3) \end{pmatrix}.
\]
Pour cela,  nous vérifions que $R' ( \boldsymbol{x}) =\psi^{-1}(R(x,y))=\psi^{-1}(T_\phi(x,y))$ :
\[
\begin{array}{clll} \psi^{-1}(T_\phi(x,y))&=&\psi^{-1}(x+1/\phi^2,y+x-1/(2\phi^3)), \\&=&\begin{pmatrix}  -x/ \phi-1/\phi^3 \\ -y-x+1/(2\phi^3) - (x+1/\phi^2)^2 / (2\phi)+ (x+1/\phi^2)/(2\phi^2)  \end{pmatrix} \end{array}.
\]
Puisque $-x/ \phi+1/\phi^2 +1/\phi^2-1  = -x/ \phi-1/\phi^3$, il ne nous reste qu'à montrer
que 
\[
\begin{array}{cc}  -y- x^2 /( 2\phi)+x/(2\phi^2)  -x/\phi-1/(2\phi^3)  -x/\phi+1/\phi^2-1/(2\phi^3) =-y-x\\+1/(2\phi^3) - (x+1/\phi^2)^2 / (2\phi)+ (x+1/\phi^2)/(2\phi^2) .\end{array}
\]
Soit encore, 
\[
\begin{array}{l}
- x^2 /( 2\phi)-x/\phi-1/(2\phi^3)  -x/\phi+1/\phi^2-1/(2\phi^3)=-x+1/(2\phi^3) \\
\ \ \ \ \ \ \ \ \ \ \ \ \ \ \ \ \ \ \ \ \ \ \ \ \ \ \ \ \ \ \ \ \ \ \ \ \ \ \ \ \ \ \ \ \ \ \ \ \ \ \ \ \ \ \ \ \ \ \ \ \ \ \ \ \ \ \ \ \ \ \ \ - (x+1/\phi^2)^2 / (2\phi)+ 1/(2\phi^4),\\
\begin{array}{clll}
&\Longleftrightarrow&- x^2 /( 2\phi)-x/\phi-1/(2\phi^3)  -x/\phi+1/\phi^2-1/(2\phi^3)=-x \\
&& \ \ \ \ \ \ \ \ \ \ \ \ \ \ \ \ \ \ \ \ \ \ \ \ \ \ \ \ \ \ \ \ \ \ \ \ \ \ \ \ \ \ \ \ \ \ \ \ \ \ \ \ \ \ \ \ \ \ \ \ - (x+1/\phi^2)^2 / (2\phi)+ 1/(2\phi^2),\\
&\Longleftrightarrow& -x/\phi-1/(2\phi^3)  -x/\phi+1/\phi^2-1/(2\phi^3)=-x - x/ \phi^3-1/(2\phi^5)+ 1/(2\phi^2),\\
&\Longleftrightarrow&  \left\{\begin{array}{clclc}-2/\phi  &= &-1-1/\phi^3 ,\\-1/\phi^3 +1/\phi^2 & =& -1/(2\phi^5)+ 1/(2\phi^2).\end{array}\right.
\end{array}
\end{array}
\]
Puisque $\phi^3-2\phi^2+1=0$, nous trouvons le résultat attendu. Nous venons donc de montrer que :
\begin{equation} \label{eq:zonesind} 
T_\phi(\psi^{-1}(D_1)) \subset D_2 ', \  T_\phi(\psi^{-1}(D_2)) \subset D_1 ' \mbox{ et } T_\phi(D_2')-(1,0) \subset D_1 '. 
\end{equation}

\textbf{Les ensembles $D_1'$ et $D_2 '$ sont presque sûrement disjoints.} 
Soit $\boldsymbol{x} \in D_1' \cap D_2 '$. Il existe donc $(x,y)\in D$, et $(x_1,y_1)\in D_1$, tels que $\boldsymbol{x} =\psi^{-1}(x,y) = T_\phi (\psi^{-1}(x_1,y_1))$. Donc :
\[
\begin{pmatrix} -x/ \phi \\ -y- x^2 /( 2\phi)+x/(2\phi^2)  \end{pmatrix} = \begin{pmatrix}  -x_1/ \phi +1/\phi^2 \\ -y_1- x_1^2 /( 2\phi)+x_1/(2\phi^2)-x_1/\phi-1/(2\phi^3) \end{pmatrix}.
\]
Donc $x=x_1-1/\phi$, et 
\[
\begin{array}{l}
-y- x^2 /( 2\phi)+x/(2\phi^2)=-y_1- x_1^2 /( 2\phi)+x_1/(2\phi^2)-x_1/\phi-1/(2\phi^3),\\
\begin{array}{cllll}
&\Longleftrightarrow& y =y_1+ x_1^2 /( 2\phi)-x_1/(2\phi^2)+x_1/\phi+1/(2\phi^3) -x^2 /( 2\phi)+x/(2\phi^2) ,\\
&\Longleftrightarrow&  y =y_1+ x_1^2 /( 2\phi)-x_1/(2\phi^2)+x_1/\phi+1/(2\phi^3) \\
&& \ \ \ \  \ \ \ \  \ \ \ \  \ \ \ \  \ \ \ \  \ \ \ \  \ \ \ \  \ \ \ \  \ \ \ \  \ \ \ \  \ \ \ \ -(x_1-1/\phi)^2 /( 2\phi)+(x_1-1/\phi)/(2\phi^2),\\
&\Longleftrightarrow&  y = y_1+x_1/\phi+1/(2\phi^3) +x_1/\phi^2-1/(2\phi^3) -1/(2\phi^3) , \\
&\Longleftrightarrow&  y =y_1+x_1-1/(2\phi^3).
\end{array}
\end{array}
\]
Nous trouvons  $(x,y)=T_\phi(x_1,y_1)-(1,0)$. Mais puisque $(x_1,y_1)\in D_1$, le point $\boldsymbol{x}_0 = T_\phi(x_1,y_1) $ appartient à l'ensemble $D$. Il existe alors deux points  $(x,y)$ et $\boldsymbol{x}_0$ de $D$ tels que la différence vaut $(-1,0)\in \mathbb Z^2$. Donc, $\mu(D_1 ' \cap D_2')=0$, ce qui termine cette partie de la preuve.\\

\textbf{Il existe un réel $z'$ tel que $\mathfrak p(D_1')\subset ]z'-1,z'+1/\phi^2]$ et $\mathfrak p(D_2')\subset]z'-1/\phi^2,z'+1/ \phi^2[$.} 
C'est immédiat en remarquant qu'en notant $h$ l'homothétie réelle de rapport $-1/\phi$, alors $\mathfrak p\circ \psi^{-1}=h\circ\mathfrak  p$.\\

\textbf{Donnons-nous deux points $\boldsymbol{x}_1$ et $\boldsymbol{x}_2$ de $D'$ et vérifions que presque sûrement : $\boldsymbol{x}_1-\boldsymbol{x}_2\notin \mathbb Z^2$.} 
Commençons par supposer qu'il existe  deux points $(x,y)\in D$ et $(x_1,y_1)$ de $D_1$, et un vecteur entier $\boldsymbol{n}=(n,m)\in \mathbb Z^2$, tels que $\psi^{-1}(x,y) = T_\phi \psi^{-1}(x_2,y_2)+(n,m)$. Alors :
\[
\begin{pmatrix} -x/ \phi \\-y- x^2 /( 2\phi)+x/(2\phi^2) \end{pmatrix} =\begin{pmatrix} -x_1/ \phi +1/\phi^2+n \\-y_1- x_1^2 /( 2\phi)+x_1/(2\phi^2)-x_1/\phi-1/(2\phi^3)+m \end{pmatrix}.
\]
Donc $x=x_1-1/\phi-n\phi$. Puisque $\mathfrak p(D_1)\subset]z-1,z+1/\phi^2]$, et $\mathfrak p(D)\subset]z-1,z+1+1/\phi^2]$, alors nécessairement $n\in\{0,-1\}$. \\

Si $n=0$, nous pouvons directement  utiliser le fait que les ensembles $D_1'$ et $D_2 '$ sont presque sûrement disjoints. Sinon, $n=-1$, et donc $x=x_1+1$. Alors :
\[
\begin{array}{ll}
-y- x^2 /( 2\phi)+x/(2\phi^2) = -y_1- x_1^2 /( 2\phi)+x_1/(2\phi^2)-x_1/\phi-1/(2\phi^3)+m,\\
\begin{array}{cllllcccc}
&\Longleftrightarrow& y = y_1+ x_1^2 /( 2\phi)-x_1/(2\phi^2)+x_1/\phi+1/(2\phi^3) -x^2 /( 2\phi)+x/(2\phi^2)+m,\\
&\Longleftrightarrow& y = y_1+ x_1^2 /( 2\phi)-x_1/(2\phi^2)+x_1/\phi+1/(2\phi^3) -(x_1+1)^2 /( 2\phi)\\
&          & \ \ \ \  \ \ \  \ \ \ \  \ \ \ \  \ \ \ \ \  \ \ \ \  \ \ \ \   \ \ \ \  \ \ \ \  \ \ \ \  \ \ \ \  \ \ \ \  \ \  \ \ \ \  \ \ \ \  \ \ \ \  \ \ \ \   \ \ +(x_1+1)/(2\phi^2)+m,\\
&\Longleftrightarrow& y = y_1+x_1/\phi+1/(2\phi^3) -x_1/\phi-1/(2\phi) +1/(2\phi^2)+m,\\
&\Longleftrightarrow& y =y_1+m.
\end{array}
\end{array}
\]

La mesure de ces points est nulle. Nous venons de traiter le cas le plus difficile. En effet, avec les mêmes notations, supposons que  $(\boldsymbol{x}^1,\boldsymbol{x}^2)$ est un couple de point de $D_1'$ ou $D_2'$, tel que $\boldsymbol{x}^1-\boldsymbol{x}^2=(n,m)\in \mathbb Z^2$. Alors, avec les contraintes imposées sur  les longueurs à $\mathfrak p(D)$ et $\mathfrak p(D_1)$, il est relativement immédiat que $n=0$. Nous en déduisons alors aisément qu'il existe deux points de $D$ dont la différence est un vecteur de la forme $(0,m)$, où $m$ est un entier. Ces points forment encore un ensemble de mesure nulle.\\

\textbf{Vérifions que $\mathcal L_{\mathcal{R}'} = \mathcal L(\sigma(\mathcal L_{\mathcal{R}}))$.} 
L'application de premier retour de $R'$ dans $D_1'$ est conjuguée via $\psi$ à $R$, par l'équation \ref{eq:zonesind}. Fixons alors $\boldsymbol{x}' \in D_1'$ et notons $\boldsymbol{x}$ le point de $D$, pré-image de $\boldsymbol{x}'$ par $\psi$. Nous trouvons ainsi : $C(\boldsymbol{x}')=\sigma(C(\boldsymbol{x}))$. Supposons maintenant que $\boldsymbol{x}' \in D_2'$ et notons $\boldsymbol{z}'=T_\phi^{-1} \boldsymbol{x}' \in D_1'$. Par le raisonnement précédent, nous savons qu'il existe $\boldsymbol{x}\in D$, tel que $C(\boldsymbol{z}')=\sigma(C(\boldsymbol{x}))$. Nous concluons alors en remarquant que $\mathcal S(C(\boldsymbol{z}'))=C(\boldsymbol{x}')$  et donc que $C(\boldsymbol{x}')=\mathcal S(\sigma(C(\boldsymbol{x})))$. \\

\textbf{Le système symbolique $\mathcal{S}_{\mathcal{R}'}$ est conjugué à $\overline{\mathcal{T}}_\phi$.} 
Donnons-nous deux points $\boldsymbol{x}^1$ et $\boldsymbol{x}^2$ de $D'$ et supposons qu'ils aient le même codage. Nous reprenons les arguments du point précédent. 

Si ces deux points appartiennent à l'ensemble $D_1'$, alors il existe deux points  $\boldsymbol{y}^1$ et $\boldsymbol{y}^2$ de $D$ tels que $C(\boldsymbol{x}^1)=\sigma(C(\boldsymbol{y}^1))$ et  $C(\boldsymbol{x}^2)=\sigma(C(\boldsymbol{y}^2))$. Puisque la substitution de Fibonacci est un automorphisme du groupe libre : $C(\boldsymbol{y}^1)=C(\boldsymbol{y}^2)$. Nous pouvons conclure car la mesure de ces points est nulle. Nous traitons le cas où les points $\boldsymbol{x}^1$ et $\boldsymbol{x}^2$ appartiennent à  $D'_2$ de manière analogue.
\end{proof}

\begin{proof}[Preuve de la proposition \ref{prop:phi1}]
Notons  $z=1/(2\phi)+1/(2\phi^3)$ et considérons les fonctions quadratiques $p$, $q$ et $r$, définies pour tout réel $x$, par :
\[
p (x)=\phi^2\frac{x^2}{2}-\phi \frac{x}{2}-\frac{1}{\phi}, \ q (x)=p(x)+\phi^2x +\frac{3}{2} \mbox{ et }r (x)=p(x)-\phi^2x +1+\frac{1}{2\phi^3}.
\]
Nous définissons les zones $D_1^{(1)}$ et $D_2^{(1)}$ par :
\[
\begin{array}{clclcl} & &D_1^{(1)} & = & \Big{\{} \ (x,y)\ ;\ p (x)<y\leq p (x)+1 \mbox{ et } y \leq \min(q (x),r (x)-1) \  \Big{\}} \\
& \mbox{ et }& D_2^{(1)} & = & \Big{\{} \ (x,y)\ ;\ p (x)<y\leq p(x)+1 \mbox{ et } r (x)-1<y \leq r (x) \  \Big{\}}  . \end{array}
\]

Nous notons $R^{(1)}$ l'application de l'ensemble $D$ dans lui-même définie par 
\[
R^{(1)}(\boldsymbol{x})=T_\phi(\boldsymbol{x}) \mbox{, si } \boldsymbol{x}\in D_1 \mbox{, et  } R^{(1)}(\boldsymbol{x})=T_\phi(\boldsymbol{x})-(1,0) \mbox{ si }\boldsymbol{x}\in D_2.
\]

\begin{figure}[H] \centering
 \includegraphics[width=10cm]{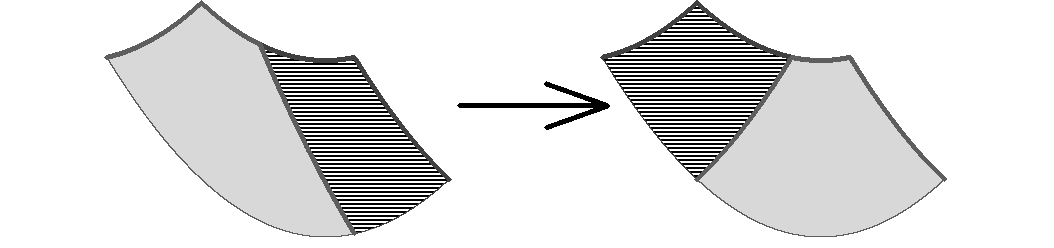} \caption{Représentation de l'application $R^{(1)}$ sur $D^{(1)}=D_1^{(1)} \cup D_2^{(1)}$.} \label{fig:e1}
\end{figure}

Nous montrons alors que le système symbolique $\mathcal{S}_{\mathcal{R}}$ est conjugué à $\overline{\mathcal{T}}_\phi$ en utilisant la proposition \ref{prop:raff}. Soit $\boldsymbol{x}^1=(x_1,y_1)$ et $\boldsymbol{x}^2=(x_2,y_2)$ deux points de $D^{(1)}$ tels que $C(\boldsymbol{x}^1)=C(\boldsymbol{x}^2)$. Commençons par supposer que $x_1=x_2-\delta$ avec $\delta>0$. Notons $\mathfrak p$ la projection $(x,y)\mapsto x$ et remarquons que pour tout couple $(\boldsymbol{y},\boldsymbol{y}')$ de $D^{(1)}_1$ (respectivement $D^{(1)}_2$), $\mathfrak p(\boldsymbol{y})-\mathfrak p(\boldsymbol{y}')= \mathfrak p\left(R^{(1)} \boldsymbol{y}\right)-\mathfrak p\left(R^{(1)} \boldsymbol{y}'\right)$. Donc si $C(\boldsymbol{x}^1)=C(\boldsymbol{x}^2)$, 
\[
\mbox{pour tout entier $n$,  } \mathfrak p \left( {R^{(1)}}^{n } (\boldsymbol{x}^2) \right) - \mathfrak p \left( {R^{(1)}}^{n } (\boldsymbol{x}^1)\right)  =\delta.
\]

Par minimalité du système $\overline{\mathcal{T}}_\phi$, il existe un entier $n_0$ tel que :
\[
{R^{(1)}}^{n_0}(\boldsymbol{x}^2)\in ]z-1/(\phi^2),z-1/(\phi^2)+\delta/2[\times \mathbb R \cap D^{(1)}_2.
\]
Nous arrivons à une contradiction puisque $\{z-1/(\phi^2)-\delta/2 \}\times \mathbb R \cap D^{(1)}_2 = \emptyset$, donc nécessairement $x_1=x_2$. Nous allons appliquer le même raisonnement pour montrer que $y_1=y_2$. Notons $x=x_1=x_2$, et supposons que $y_1=y_2+\delta$, où $\delta >0$. Notons $\mathfrak q$ la projection $(x,y)\mapsto y$. Comme précédemment, si $C(\boldsymbol{x}^1)=C(\boldsymbol{x}^2)$,  
\[
\mbox{ pour tout entier $n$,  } \mathfrak q  \left( {R^{(1)}}^{n_0 } (\boldsymbol{x}^1) \right) - \mathfrak q \left( {R^{(1)}}^{n } (\boldsymbol{x}^2)\right)  =\delta.
\]

Il existe $\epsilon>0$, et un entier $n_0$ tels que
\[
{R^{(1)}}^{n_0}(\boldsymbol{x}^1)\in ]z,z-\epsilon [\times \mathbb R \cap D^{(1)}_1,
\]
et donc l'ensemble $]z ,z-\epsilon +\delta [\times \mathbb R \cap D^{(1)} + (0,\delta)$ est disjoint de $D^{(1)}_1$.
\end{proof}

\begin{proof}[Preuve du théorème \ref{th:2}] Nous avons déjà vu dans la proposition \ref{prop:n+1}, que pour tout entier $M$, $p^{T_\phi} (M) \geq M+1$. Nous allons montrer que pour tout entier $M$ fixé, nous pouvons construire un échange de morceaux $\mathcal R^{(N)}$ conjugué à $T_\phi$, tel que pour tout entier $1\leq k \leq M$, $p_{\mathcal R^{(N)}} (k)=k+1$. \\

Nous définissons pour tout entier $N$ par récurrence un échange de morceaux  $\mathcal{R}^{(N)}$ conjugué à $T_\phi$ et satisfaisant les hypothèses de la proposition \ref{prop:phi2}. Nous initialisons la récurrence en considérant l'échange de morceaux $\mathcal{R}^{(1)}$ construit dans la proposition \ref{prop:phi1}. Le $(N+1)$-ième échange de morceaux est celui obtenu par la proposition \ref{prop:phi2} appliquée à l'échange de morceaux  $\mathcal{R}^{(N)}$.

\begin{figure}[H]  \centering  \includegraphics[width=14cm]{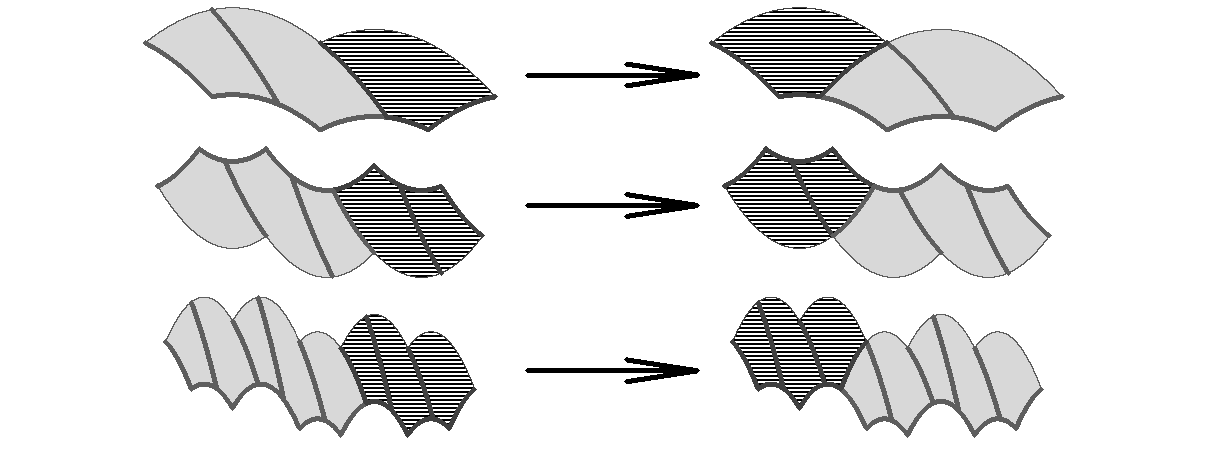}
\caption{Représentation des échanges de morceaux $\mathcal{R}^{(2)}$, $\mathcal{R}^{(3)}$ et $\mathcal{R}^{(4)}$.}
\end{figure}

Commençons donc par fixer un entier $M$. Nous notons $\mathfrak L^\phi$ le langage factoriel généré par le mot de Fibonacci, point fixe de la substitution $\sigma$. 
Par l'équation (\ref{eq:langa}) et le lemme suivant, la suite de langages $(\mathfrak L_{\mathcal R^{(N)}})_N$ associés aux échanges de morceaux $(\mathcal R^{(N)})_N$ converge au sens de l'équation (\ref{eq:con}) vers le langage $\mathfrak L^\phi$, qui est de complexité $M \mapsto M+1$.
\end{proof}

Nous énonçons maintenant un lemme combinatoire dont la preuve nous a été communiquée par J. Cassaigne.

\begin{lemme} \label{le:julien}
Soit $\sigma$ la substitution de Fibonacci définie en (\ref{eq:fibo}). Nous notons $\mathfrak L^\phi$ le langage factoriel généré par le mot de Fibonacci, point fixe de la substitution $\sigma$. 
Nous définissons pour tout langage factoriel $\mathfrak L$ de $\{1,2\}^*$ une suite de langages $(\mathfrak L_N)_N$ par récurrence de la manière suivante :
\begin{equation} \label{eq:lala}
\mathfrak L_0= \mathfrak L \mbox{ et pour tout entier $n \geq0$, } \mathfrak L_{N+1} = \mathcal L\big{(} \sigma(\mathfrak L_N) \big{)}. 
\end{equation}
Alors la suite de langages $(\mathfrak L_N)_N$ converge vers $\mathfrak L^\phi$, dans le sens où pour tout entier $M$, il existe un rang $N_0$ à partir duquel pour tout $N\geq N_0$,
\begin{equation} \label{eq:con}
\{ u \in  \mathfrak L_N  \mbox{ de longueur inférieure à $M$ } \} =\{ u \in \mathfrak L^\phi \mbox{ de longueur inférieure à $M$ } \}.
\end{equation} 
 \end{lemme}

\begin{proof}[Preuve du lemme \ref{le:julien}]
Donnons-nous deux langages  $\mathfrak L$ et $\mathfrak L'$, et associons-leur les suites de langages définis en (\ref{eq:lala}), respectivement notées $(\mathfrak L_N)_N$ et $(\mathfrak L_N')_N$. Nous commençons par remarquer que si $\mathfrak L$ est  inclus dans $\mathfrak L'$, alors pour tout entier $N$ le langage $\mathfrak L_N$   est inclus dans  $\mathfrak L_N'$. Il suffit donc de traiter les deux cas extrêmes : $\mathfrak L=\{1,2,\epsilon\}$ et $\mathfrak L=\{1,2\}^*$. 

Dans le premier cas, la suite $(\mathfrak L_N)_N$ est croissante (pour l'inclusion) ; la limite des langages $(\mathfrak L_N)_N$ est donc leur réunion qui est clairement le langage $\mathfrak L^\phi$ du mot de Fibonacci. 

Dans le second cas, la suite de langages $(\mathfrak L_N)_N$ est décroissante et sa limite est son intersection qui contient clairement le langage $\mathfrak L^\phi$. L'inclusion réciproque est plus délicate : c'est le seul endroit où nous utilisons des propriétés spécifiques de la substitution de Fibonacci. Si $w$ est dans l'intersection des langages $(\mathfrak L_N)_N$, alors en particulier $w$ est facteur de mots de la forme $\sigma^m(v_m)$, où $m$ est un entier et $v_m$ un mot quelconque. Nous pouvons alors supposer que $m$ est un entier non nul,  tel que $|s^m(1)| \geq |s^m(2)| \geq |w|$ et  que $v_m$ un mot  de longueur au plus $2$. Si $v_m$ n'est pas le mot $22$, $v_m$ est dans le langage $\mathfrak L^\phi$ (qui contient clairement les mots $11$, $12$ et $22$) donc $w$ aussi. Si $v_m=22$, alors $w$ est facteur de $\sigma^m(22) = \sigma^{m-1}(11)$ : il est donc  également un mot du langage $\mathfrak L^\phi$.
\end{proof}

\begin{figure}[H] \centering \includegraphics[width=12cm]{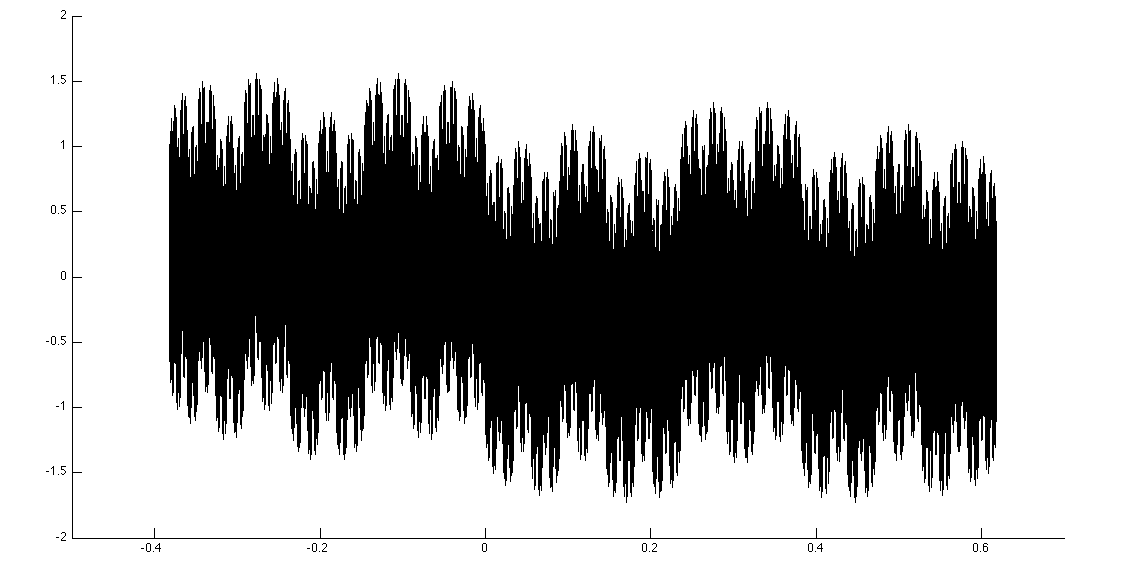} \caption{Représentation du domaine $D^{(20)}$.} \end{figure}

\begin{remarque} \label{rema}
Expliquons pourquoi la suite d'échanges de morceaux $(\mathcal R^{(N)})_N$ ne converge pas vers un «échange de morceaux bornés limite». \\
\indent
Si cet échange de morceaux  limite $\mathcal R^{(\infty)}$ existait, il serait un échange affine de deux morceaux $D^{(\infty)}_1$ et $D^{(\infty)}_2$  
dont l'image par la projection $\mathfrak p(x,y)=x$ serait respectivement $[-1/\phi,0]$ et $]0,1/\phi^2[$.\\
\indent
Pour tout point $\boldsymbol{x}_0=(x_0,y_0)$ de $D^{(\infty)}=D^{(\infty)}_1\cup D^{(\infty)}_2$,  nous nous intéressons à la suite de points $(\boldsymbol{x}_n)_n$ définie par recurrence par $\boldsymbol{x}_{n+1} = \mathcal R^{(\infty)} (\boldsymbol{x}_n)$.\\
\indent
Si nous notons $\{z\}$ la partie fractionnaire d'un nombre réel $z$,  alors par construction de la suite d'échanges de morceaux $(\mathcal R^{(N)})_N$ construits dans la preuve du théorème \ref{th:2}, nous trouvons :
\[
\begin{array}{cllll}
\boldsymbol{x}_{n} &=& \left( \left\{ x_0 + \frac{n}{\phi^2}\right\} -\frac{1}{\phi},y_0 + \frac{n}{2\phi^3} + \sum \limits_{k=0}^{n} \big{(}  \left\{ x_0 + \frac{k}{\phi^2}\right\}  -\frac{1}{\phi} \big{)} \right),\\
&=& \left( \left\{ x_0 + \frac{n}{\phi^2}\right\} -\frac{1}{\phi},y_0 + \sum \limits_{k=0}^{n} \big{(} \left\{ x_0 + \frac{k}{\phi^2}\right\} -\frac{1}{2} \big{)} \right).
\end{array}
\]
Par le travail de C.G. Pinner dans \cite{MR1743497}, 
\[
\limsup \limits_N \left|   \frac{n}{2} - \sum \limits_{k=0}^{n}  \left\{ x_0 + \frac{k}{\phi^2}\right\}   \right|= + \infty.
\]
Donc le domaine $D^{(\infty)}$ ne serait pas borné.
\end{remarque}

\end{document}